\numberwithin{equation}{section}
\numberwithin{figure}{section}
\theoremstyle{plain}
\newtheorem{thm}{\protect\theoremname}[section]
\theoremstyle{plain}
\newtheorem{question}[thm]{\protect\questionname}
\theoremstyle{definition}
\newtheorem{example}[thm]{\protect\examplename}
\theoremstyle{definition}
\newtheorem{defn}[thm]{\protect\definitionname}
\theoremstyle{plain}
\newtheorem{lem}[thm]{\protect\lemmaname}
\theoremstyle{plain}
\newtheorem{prop}[thm]{\protect\propositionname}
\theoremstyle{remark}
\newtheorem{rem}[thm]{\protect\remarkname}
\theoremstyle{plain}
\newtheorem{cor}[thm]{\protect\corollaryname}
\tikzset{
  edge node/.code={%
      \expandafter\def\expandafter\tikz@tonodes\expandafter{\tikz@tonodes #1}}}
\tikzset{
  subseteq/.style={
    draw=none,
    edge node={node [sloped, allow upside down, auto=false]{$\Subset$}}},
  Subseteq/.style={
    draw=none,
    every to/.append style={
      edge node={node [sloped, allow upside down, auto=false]{$\Subset$}}}
  }
}
\newtheorem*{thmaux}{Theorem \theoremauxnum}
\newtheorem*{coraux}{Corollary \theoremauxnum}
\newtheorem*{propaux}{Proposition \theoremauxnum}
\gdef\theoremauxnum{1}
\providecommand{\corollaryname}{Corollary}
\providecommand{\definitionname}{Definition}
\providecommand{\examplename}{Example}
\providecommand{\lemmaname}{Lemma}
\providecommand{\propositionname}{Proposition}
\providecommand{\questionname}{Question}
\providecommand{\remarkname}{Remark}
\providecommand{\theoremname}{Theorem}
\newcommand{\Miek}{Miek Messerschmidt}
\newcommand{\MiekEmail}{mmesserschmidt@gmail.com}
\newcommand{\UPAddress}{Department of Mathematics and Applied Mathematics; University of Pretoria; Private~bag~X20 Hatfield; 0028 Pretoria; South Africa}
\newcommand{\paperkeywords}{uniform continuity, right inverses, quotient maps, sequence spaces, statistical convergence}
\newcommand{\mscprimary}{%
    46B45, 
    40A35, 
    46B80 
}
\newcommand{\mscsecondary}{
    46B20, 
    46B42 
}
\begin{document}
    
    \global\long\def\norm#1{\left\Vert #1\right\Vert }%

\global\long\def\abs#1{\left|#1\right|}%

\global\long\def\set#1#2{\left\{  \vphantom{#1}\vphantom{#2}#1\right.\left|\ #2\vphantom{#1}\vphantom{#2}\right\}  }%

\global\long\def\sphere#1{\mathbf{S}_{#1}}%

\global\long\def\closedball#1{\mathbf{B}_{#1}}%

\global\long\def\openball#1{\mathbb{B}_{#1}}%

\global\long\def\duality#1#2{\left\langle \vphantom{#1}\vphantom{#2}#1\right.\left|\ #2\vphantom{#1}\vphantom{#2}\right\rangle }%

\global\long\def\parenth#1{\left(#1\right)}%

\global\long\def\curly#1{\left\{  #1\right\}  }%

\global\long\def\blockbraces#1{\left[#1\right] }%

\global\long\def\vecspan{\textup{span}}%

\global\long\def\conespan{\textup{cspan}}%

\global\long\def\image{\textup{Im}}%

\global\long\def\support{\textup{supp}}%

\global\long\def\N{\mathbb{N}}%

\global\long\def\I{\mathbb{I}}%

\global\long\def\F{\mathbb{F}}%

\global\long\def\D{\mathbb{D}}%

\global\long\def\R{\mathbb{R}}%

\global\long\def\C{\mathbb{C}}%

\global\long\def\Q{\mathbb{Q}}%

\global\long\def\Rnonneg{\mathbb{R}_{\geq0}}%

\global\long\def\tocorr{\mathbb{\ \twoheadrightarrow\ }}%

\global\long\def\closure#1#2{\overline{#1}^{#2}}%

    \global\long\def\cal#1{\mathcal{#1}}%

\global\long\def\ellp#1{\ell^{#1}}%

\global\long\def\ellinfty{\ellp{\infty}}%

\global\long\def\c{\mathbf{c}}%

\global\long\def\czero{\mathbf{c}_{0}}%

\global\long\def\lip{\textup{Lip}_{0}}%

\global\long\def\FreeLip{F}%

\global\long\def\polar{\odot}%

\global\long\def\conenorm#1{\left\llbracket #1\right\rrbracket }%

\global\long\def\tensor{\otimes}%

\global\long\def\projectivetensor{\overline{\otimes}_{\pi}}%

\global\long\def\isometricisomorphic{\simeq}%

\global\long\def\weak{\textrm{wk}}%

\global\long\def\weakstar{\textrm{wk}^{*}}%

\global\long\def\id{\textrm{id}}%

\global\long\def\ces{\textrm{ces}}%

\global\long\def\Climinf{\textup{Climinf}}%

\global\long\def\Climsup{\textup{Climsup}}%

\global\long\def\ideal{\textup{\textbf{i}}}%

\global\long\def\jdeal{\textup{\textbf{j}}}%

\global\long\def\kdeal{\textup{\textbf{k}}}%

\global\long\def\xdeal#1{\textup{\textbf{#1}}}%

\global\long\def\finideal{\textup{\textbf{f}}}%

\global\long\def\densideal{\textup{\textbf{d}}}%

\global\long\def\banachdensideal{\textup{\textbf{b}}}%

\global\long\def\reciprocalideal{\textup{\textbf{r}}}%

\global\long\def\powerideal{\textup{\textbf{p}}}%

\global\long\def\buckideal{\textup{\textbf{buck}}}%

\global\long\def\buckdensity{\textup{buck}}%

\global\long\def\tileideal{\textup{\textbf{t}}}%

\global\long\def\tiledensity{\textup{tile}}%

\global\long\def\idealczero#1{#1c_{0}}%

\global\long\def\ideallimit#1{\operatorname*{#1\lim}}%

\global\long\def\lcm{\textup{lcm}}%

    \author{\Miek}
    \address{\Miek ;\ \UPAddress}
    \email{\MiekEmail}
    \subjclass[2010]{Primary: \mscprimary; Secondary: \mscsecondary}
    \keywords{\paperkeywords}

    \title[Quotient maps with no uniformly continuous right inverses]{A family of quotient maps of $\ellinfty$ that do not admit uniformly continuous right inverses}
    \begin{abstract}
        Previously only two examples of Banach space quotient maps which do
not admit uniformly continuous right inverses were known: one due
to Aharoni and Lindenstrauss and one due to Kalton ($\ellinfty\to\ellinfty/c_{0}$).

We show through an application of Kalton's Monotone Transfinite Sequence
Theorem that a quotient map of a subspace of $\ellinfty$ of sequences
that converge to zero along an ideal in $\N$ toward another such
subspace, provided one of the ideals is `much larger' than the other,
cannot have a uniformly continuous right inverse. We show in general
that pairs of ideals in $\N$, with one much larger than the other,
occur in abundance.

Some classical examples of ideals in $\N$ presented explicitly are:
the finite subsets of $\N$, the subsets of $\N$ with convergent
reciprocal series, and, the subsets of $\N$ with density zero, Banach
density zero or Buck density zero. 
    \end{abstract}

    \maketitle
    
\section{Introduction}

\medskip

Consider the following question:
\begin{question}
\label{que:Lip-right-inverse-exists}For a given Banach space $X$
and given a closed subspace $E\subseteq X$, does the quotient map
$q:X\to X/E$ admit a uniformly continuous (or even Lipschitz) right
inverse? That is, does there exists a uniformly continuous (or Lipschitz)
map $\tau:X/E\to X$ so that $q\circ\tau=\id|_{X/E}$?
\end{question}

Of course, if $E$ is complemented, then the question is moot. If
uniform continuity is weakened to just continuity, then the answer
is `yes, always' by the Bartle-Graves Theorem \cite[Corollary~17.67]{AliprantisBorder}.

If $E$ is not complemented, then the answer can be either `yes' or
`no' and both alternatives do occur (see \cite{AharoniLindenstrauss}).
Historically, this question is important in studying the nonlinear
geometry of Banach spaces. In \cite{AharoniLindenstrauss}, by constructing
an example of such a quotient map that has a Lipschitz right inverse,
but no bounded linear right inverse, Aharoni and Lindenstrauss constructed
a pair of Banach spaces that are Lipschitz isomorphic, but not linearly
isomorphic, and thereby negatively solving the Lipschitz isomorphism
problem: ``If two Banach spaces are Lipschitz isomorphic, are they
necessarily linearly isomorphic?'' Later, in \cite{KaltonGodefroy2003},
Kalton and Godefroy used the same technique, of constructing a quotient
map that has a Lipschitz right inverse but no continuous linear right
inverse, to prove that each nonseparable weakly compactly generated
Banach space gives rise to a pair of Lipschitz isomorphic Banach spaces
that are not linearly isomorphic \cite[Corollary~4.4]{KaltonGodefroy2003}.
Furthermore, Kalton and Godefroy proved that this method is not available
in the separable case by further proving:
\begin{thm}
\cite[Corollary~3.2]{KaltonGodefroy2003} Let $X$ be a Banach space
and $E\subseteq X$ a closed subspace. If the quotient $X/E$ is separable
and the quotient map $q:X\to X/E$ has a Lipschitz right inverse,
then $q$ also has a continuous linear right inverse, i.e., the subspace
$E$ is complemented.
\end{thm}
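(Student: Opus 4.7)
The plan is to linearize the Lipschitz right inverse $\tau$ via the Lipschitz-free space and then use separability of $X/E$ to manufacture a linear lifting. Set $Y := X/E$ and, after subtracting a constant, assume $\tau(0)=0$. By the universal property of the Lipschitz-free space $\FreeLip(Y)$, the pointed Lipschitz map $\tau: Y \to X$ extends to a unique bounded linear operator $\tilde\tau: \FreeLip(Y) \to X$ with $\tilde\tau \circ \delta_Y = \tau$, where $\delta_Y : Y \hookrightarrow \FreeLip(Y)$ is the canonical isometric embedding; moreover $\norm{\tilde\tau}$ equals the Lipschitz constant of $\tau$.

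Next I would identify $q\circ\tilde\tau$ with the barycenter map. Let $\beta_Y : \FreeLip(Y) \to Y$ be the canonical bounded linear extension of $\id_Y : Y \to Y$ along $\delta_Y$. For every $y \in Y$ one has
\[ q\parenth{\tilde\tau\parenth{\delta_Y(y)}} = q\parenth{\tau(y)} = y = \beta_Y\parenth{\delta_Y(y)}, \]
and since $\delta_Y(Y)$ spans a dense subspace of $\FreeLip(Y)$ while $q\circ\tilde\tau$ and $\beta_Y$ are both bounded linear, they agree on all of $\FreeLip(Y)$. The proof then reduces to producing a bounded linear right inverse of $\beta_Y$: if $\ell : Y \to \FreeLip(Y)$ is bounded linear with $\beta_Y \circ \ell = \id_Y$, then $\sigma := \tilde\tau \circ \ell$ satisfies $q\circ\sigma = \beta_Y\circ\ell = \id_Y$, so $E$ is linearly complemented in $X$.

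The main obstacle is producing such an $\ell$, which is precisely the content of the Godefroy-Kalton linear lifting theorem for separable spaces: every separable Banach space $Y$ sits as a linearly complemented subspace of $\FreeLip(Y)$ via a bounded linear right inverse of $\beta_Y$. This is where separability of $X/E$ is indispensable; the construction uses an averaging argument over a sequence of finite-dimensional Lipschitz retractions on $Y$, compressed by an invariant mean, and is known to fail in the nonseparable setting (which is precisely what permits the Kalton-Godefroy nonseparable counterexamples mentioned just above). Granting this theorem, the construction of $\sigma$ completes the proof.
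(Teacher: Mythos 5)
The paper cites this theorem from Kalton and Godefroy without reproducing a proof, so there is no in-paper argument to compare against; your reconstruction is correct and is essentially the argument from the cited source. You linearize $\tau$ through $\FreeLip(X/E)$ via the universal property, identify $q\circ\tilde\tau$ with the barycenter map $\beta_{X/E}$ by density of $\vecspan\,\delta_{X/E}(X/E)$, and then compose $\tilde\tau$ with the Godefroy--Kalton bounded linear lifting $\ell$ of $\beta_{X/E}$ (their Theorem 3.1), which is precisely where the separability hypothesis is used and precisely what fails in the nonseparable case.
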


For this reason the Lipschitz isomorphism problem for separable Banach
spaces remains an open problem.

Question \ref{que:Lip-right-inverse-exists}, above, is also closely
related to a natural open question on the Lipschitz structure of general
ordered Banach spaces:
\begin{question}
\cite{MesserschmidtBenFeschrift} For a Banach space $X$ ordered
by a closed cone $C\subseteq X$, satisfying $X=C-C$, do there always
exist Lipschitz continuous maps $(\cdot)^{+}:X\to C$ and $(\cdot)^{-}:X\to C$
satisfying $x=x^{+}-x^{-}$ for all $x\in X$?
\end{question}

\bigskip 

To the author's knowledge (and apparently that of the MathOverflow
community \cite{MOquestion}), to date, there are only two examples
of Banach space quotient maps that do not admit uniformly continuous
or Lipschitz right inverses known. They are:
\begin{example}
[Aharoni and Lindenstrauss \cite{AharoniLindenstrauss}] \label{exa:Aharoni-and-Lindenstrauss-quotient-example}Let
$C[0,1]$ denote the space of continuous functions on $[0,1]$ and
let $D_{\Q}[0,1]$ denote the c\'adl\'ag space of all bounded real-valued
functions on $[0,1]$ that are right-continuous everywhere and with
left limits existing everywhere, that only admit discontinuities at
rational numbers. With $D_{\Q}[0,1]$ endowed with the $\norm{\cdot}_{\infty}$-norm,
the quotient map $q:D_{\Q}[0,1]\to D_{\Q}[0,1]/C[0,1]$ does not admit
a uniformly continuous right inverse.
\end{example}

{}
\begin{example}
[Kalton \cite{Kalton2011}] \label{exa:Kalton-quotient-example}The
quotient map $q:\ellinfty\to\ellinfty/c_{0}$ does not admit a uniformly
continuous right inverse.
\end{example}

Kalton's example proceeds through application of Kalton's Monotone
Transfinite Sequence Theorem (\cite[Theorem~4.1]{Kalton2011}, stated
as Theorem~\ref{thm:Kalton's-Monotone-Transfinite-seq-thm} below)
which is also the crucial ingredient in this paper.
\begin{defn}
A Banach lattice $X$ is said to have the \emph{Monotone Transfinite
Sequence Property} if every monotone increasing transfinite sequence
in $X$ (indexed by the first uncountable ordinal) is eventually constant.
\end{defn}

\begin{thm}
[Kalton's Monotone Transfinite Sequence Theorem] \label{thm:Kalton's-Monotone-Transfinite-seq-thm}
If the closed unit ball $\closedball X$ of a Banach lattice $X$
embeds uniformly continuously into $\ellinfty$ through some $f:\closedball X\to\ellinfty$
(with uniformly continuous inverse $g:f(\closedball X)\to X$), then
$X$ has the Monotone Transfinite Sequence Property.
\end{thm}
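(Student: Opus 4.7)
The plan is to argue by contradiction: suppose $(x_\alpha)_{\alpha<\omega_1}\subset X$ is monotone increasing but not eventually constant, and derive a contradiction by transferring to $\ellinfty$ through $f$ and invoking the MTSP of $\ellinfty$ itself. The latter can be verified directly: for any monotone increasing $(v_\alpha)\subset\ellinfty$, each coordinate sequence $(v_\alpha(n))$ is a bounded monotone real-valued transfinite sequence, eventually constant at some $\gamma_n<\omega_1$, and then $\sup_n\gamma_n<\omega_1$ is a common stabilization ordinal.

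First I would make the following reductions. Translating by $x_0$ renders the differences $x_\alpha-x_0$ non-negative, so the Banach lattice axiom $0\leq u\leq v\Rightarrow\norm u\leq\norm v$ makes $(\norm{x_\alpha-x_0})_\alpha$ a monotone increasing transfinite sequence of reals; the same axiom applied to $0\leq x_\beta-x_{\alpha'}\leq x_\beta-x_\alpha$ for $\alpha\leq\alpha'<\beta$ makes $s_\alpha:=\sup_{\beta>\alpha}\norm{x_\beta-x_\alpha}$ monotone \emph{decreasing} in $\alpha$, hence eventually constant at some $s\geq 0$. The case $s=0$ would yield eventual constancy, contradicting the hypothesis, so $s>0$. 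A transfinite recursion---choosing $\gamma_{\delta+1}>\gamma_\delta$ at successor stages with $\norm{x_{\gamma_{\delta+1}}-x_{\gamma_\delta}}\geq s/2$, and taking $\gamma_\delta=\sup_{\beta<\delta}\gamma_\beta$ at countable limit stages---produces an $\omega_1$-chain, and the lattice axiom upgrades this consecutive separation to full pairwise separation $\norm{x_{\gamma_\beta}-x_{\gamma_\delta}}\geq s/2$ for $\beta<\delta$. Scaling into $\closedball X$ (the bounded case being immediate and the unbounded case requiring only a minor variant), I would thereby obtain a strictly increasing $(x_\alpha)_{\alpha<\omega_1}\subset\closedball X$ that is $\delta$-separated for some $\delta>0$.

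Uniform continuity of $g$ then yields $\epsilon>0$ with $\norm{f(x_\beta)-f(x_\alpha)}_\infty>\epsilon$ for all $\alpha\neq\beta$, and since a uniformly continuous map on a set of finite diameter has bounded image, $f(\closedball X)$ lies in a bounded subset of $\ellinfty$. I would thus have an uncountable $\epsilon$-separated family $(f(x_\alpha))$ inside a bounded subset of $\ellinfty$.

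The hard part will be extracting the contradiction from this setup. The image sequence $(f(x_\alpha))$ need not be monotone (as $f$ is not assumed order-preserving), so the MTSP of $\ellinfty$ does not apply to it directly; nor is uncountable $\epsilon$-separation in a bounded subset of $\ellinfty$ on its own contradictory (witness $\{1_A:A\subseteq\N\}$). My plan is to form the auxiliary pointwise partial suprema $T_\gamma:=\bigvee_{\alpha\leq\gamma}f(x_\alpha)$ and partial infima $U_\gamma:=\bigwedge_{\alpha\leq\gamma}f(x_\alpha)$ in $\ellinfty$, which \emph{are} monotone transfinite sequences and so, by the MTSP of $\ellinfty$, eventually stabilize at some $T^\ast,U^\ast$ past a common $\gamma_\ast<\omega_1$. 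For $\alpha\geq\gamma_\ast$ each $f(x_\alpha)$ is then pointwise trapped in the order interval $[U^\ast,T^\ast]\subset\ellinfty$. The delicate final step---the technical core of Kalton's original argument---is to exploit this trapping together with the uniform continuity of $g$, lifting suitable lattice combinations of $T^\ast$, $U^\ast$, and $f(x_\alpha)$ through $g$ back to $X$ so as to invoke the Banach lattice structure of $X$ itself and ultimately violate the $\epsilon$-separation.
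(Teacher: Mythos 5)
The paper does not supply a proof of Theorem~\ref{thm:Kalton's-Monotone-Transfinite-seq-thm}; it is quoted from Kalton \cite[Theorem~4.1]{Kalton2011}, so there is no in-paper argument to compare against. What you have written is a proof sketch that, by your own admission, stops before the decisive step, so it must be assessed as incomplete.

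Your preliminary reductions are correct: $\alpha\mapsto\sup_{\beta>\alpha}\norm{x_{\beta}-x_{\alpha}}$ is decreasing (hence eventually constant at some $s\geq 0$ since a monotone transfinite sequence of reals indexed by $\omega_{1}$ stabilizes), $s=0$ forces eventual constancy, for $s>0$ the recursion plus the lattice norm axiom yields an increasing $\delta$-separated transfinite sequence, and the coordinatewise-stabilization argument that $\ellinfty$ itself has the Monotone Transfinite Sequence Property is sound. But the continuation has a genuine gap. Two concrete defects: first, you propose to lift ``lattice combinations of $T^{\ast}$, $U^{\ast}$, and $f(x_{\alpha})$'' through $g$, yet $g$ is defined only on $f(\closedball{X})$ and there is no reason for $T^{\ast}$, $U^{\ast}$, or their lattice combinations with the $f(x_{\alpha})$ to lie in that image, so this appeal to $g$ is not well-posed; second, the ``trapping'' of $\{f(x_{\alpha})\}_{\alpha\geq\gamma_{\ast}}$ in the order interval $[U^{\ast},T^{\ast}]$ of $\ellinfty$ buys nothing on its own, since an order interval of $\ellinfty$ is merely a bounded set and, as you yourself note with $\{1_{A}:A\subseteq\N\}$, a bounded subset of $\ellinfty$ can contain an uncountable $\epsilon$-separated family. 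You have therefore reproduced precisely the configuration you already identified as non-contradictory, and the step that would turn it into a contradiction --- the interplay of uniform continuity with the lattice order on $X$ rather than on $\ellinfty$, which is the actual content of Kalton's theorem --- is absent. Without it the proposal is not a proof.
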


As mentioned in \cite[Theorem~4.2]{Kalton2011} (cf. Lemma~\ref{lem:strict-f-decreasing-transfinite-sequence-in-N-exist}
below), through a transfinite recursive construction, it is shown
that the quotient $\ellinfty/c_{0}$ does not have the Monotone Transfinite
Sequence Property. From Kalton's Monotone Transfinite Sequence Theorem
(Theorem~\ref{thm:Kalton's-Monotone-Transfinite-seq-thm}) it then
follows easily that the quotient $q:\ellinfty\to\ellinfty/c_{0}$
does not admit a uniformly continuous right inverse.

Only very recently was it proven that subspaces of sequences in $\ellinfty$
that converge to zero with respect to modes of convergence other than
the usual, say in density or Banach density\footnote{Such different modes of convergence, e.g. convergence in density,
make important appearances in other branches of mathematics like ergodic
Ramsey theory (see \cite[Section 4.2]{Furstenberg} for example).}, etc. (cf. Sections~\ref{sec:Preliminaries} and \ref{sec:The-ideals}),
are not complemented in $\ellinfty$ and do not even embed (linearly)
into $\ellinfty$. See Leonetti \cite{Leonetti} and Kania \cite{Kania2019}
respectively. The main aim of this paper is to show, through application
of Kalton's Monotone Transfinite Sequence Theorem, that quotient maps
toward such subspaces of sequences in $\ellinfty$ that converge to
zero with respect to such other modes of convergence, do not admit
uniformly continuous right inverses. We will show that there are an
uncountable number of such subspaces and thereby we provide a large
family of examples of quotient maps of $\ellinfty$ that do not admit
uniformly continuous (or Lipschitz) right inverses, on top of the
previously known two such examples, Examples~\ref{exa:Aharoni-and-Lindenstrauss-quotient-example}
and~\ref{exa:Kalton-quotient-example}, above.

\medskip

In Section~\ref{sec:Preliminaries} we provide the definition of
an ideal $\ideal$ in $\N$, the meaning of $\ideal$-convergence,
and we introduce the notation $\ideal c_{0}$ by which we will denote
the closed subspace of $\ellinfty$ of all elements that $\ideal$-converge
to zero.

In Section~\ref{sec:4 Much larger inclusions of ideals in =00005CN}
we define a strict partial order that we call `much larger inclusion'
(cf. Definition~\ref{def:much-larger}) for general ideals $\ideal$
and $\jdeal$ in $\N$. We use the notation $\ideal\Subset\jdeal.$
Through application of Kalton's Monotone Transfinite Sequence Theorem
(Theorem~\ref{thm:Kalton's-Monotone-Transfinite-seq-thm}), we arrive
at our main tool, Corollary~\ref{cor:jc0 =00005Cto jc0/ic0 has no uniform cont right inverse},
stating that if $\ideal\Subset\jdeal$, then the quotient map $\jdeal c_{0}\to\jdeal c_{0}/\ideal c_{0}$
admits no uniformly continuous right inverse.

We also include some general remarks in Section~\ref{sec:4 Much larger inclusions of ideals in =00005CN}
on the much larger inclusion relation. Firstly, much larger inclusion
is distinct from set inclusion (cf. Remark~\ref{rem:much-larger-inclusion-different-from-inclusion}).
Further, for any ideals $\ideal$ and $\jdeal$ in $\N$ with $\ideal\Subset\jdeal$,
there necessarily exists uncountably many incomparable ideals $\kdeal$
in $\N$, satisfying $\ideal\Subset\kdeal\Subset\jdeal$ (cf. Proposition~\ref{prop:inbetween-lemma}).
Also, there exist maximal $\Subset$-chains (cf. Definition~\ref{def:much-larger-chain})
of ideals in $\N$ with the ideal $\finideal$ of all finite subsets
of $\N$ as smallest element, and $2^{\N}$ as largest element and
such maximal chains are necessarily uncountable as well (cf. Proposition~\ref{prop:maximal-much-larger-chains}).
Therefore, not unexpectedly, much larger inclusions occur in some
abundance within the set of all ideals in $\N$. Since there appears
to be nothing special about any particular instance of a much larger
inclusion, for any ideals $\ideal$ and $\jdeal$ in $\N$ with $\ideal\Subset\jdeal$,
we could conjecture that the quotient space $\jdeal c_{0}/\ideal c_{0}$
might always be isomorphic to $\ellinfty/c_{0}$.

In Section~\ref{sec:The-ideals} we introduce some classical ideals
in $\N$: The ideal $\finideal$ of all finite subsets of $\N$, the
ideal $\reciprocalideal$ of all subsets of $\N$ whose reciprocal
series converge, and the ideals $\densideal,\banachdensideal,\buckideal$
and $\tileideal$ of all subsets of $\N$ whose respective, densities,
Banach densities, Buck densities, and, tile densities are zero. We
show that Buck density and tile density are actually identical, and,
since tile density is easier to handle, we subsequently only consider
tile density.

The final section of the paper is somewhat technical, and is devoted
to establishing the following much larger inclusions of the mentioned
classical ideals in $\N$. They are collected in Corollary~\ref{cor:much-larger-inclusion-relations}
as:

\begin{center}
\begin{tikzcd}  
	\finideal \arrow[Subseteq]{r} &  
	\tileideal\cap\reciprocalideal \arrow[Subseteq]{r}\arrow[Subseteq]{d} & 
	\banachdensideal\cap\reciprocalideal \arrow[Subseteq]{r}\arrow[Subseteq]{d} & 
	\reciprocalideal\arrow[Subseteq]{d} & \\
	&  
	\tileideal \arrow[Subseteq]{r} & 
	\banachdensideal \arrow[Subseteq]{r} & 
	\densideal \arrow[Subseteq]{r} & 
	2^{\N}.
\end{tikzcd}
\end{center}Some of these much larger inclusions are entirely obvious, while a
few others like the much larger inclusions $\banachdensideal\cap\reciprocalideal\Subset\banachdensideal$,
$\tileideal\cap\reciprocalideal\Subset\tileideal$ and $\tileideal\cap\reciprocalideal\Subset\banachdensideal\cap\reciprocalideal$
require some rather careful recursive constructions. Once these much
larger inclusions have been established, by applying Corollary~\ref{cor:jc0 =00005Cto jc0/ic0 has no uniform cont right inverse},
we present a number of explicit quotient maps of $\ellinfty$ and
quotient maps of its subspaces $\densideal c_{0}$, $\banachdensideal c_{0}$,
$\tileideal c_{0}$, $\reciprocalideal c_{0}$, $(\banachdensideal\cap\reciprocalideal)c_{0}$,
$(\tileideal\cap\reciprocalideal)c_{0}$ that do not admit uniformly
continuous right inverses in Corollary~\ref{cor:examples} (but by
Propositions~\ref{prop:inbetween-lemma} and~\ref{prop:maximal-much-larger-chains}
there exist many more, of course).

\section{Preliminaries\label{sec:Preliminaries}}

All vector spaces are assumed to be over $\R$. We assume some minimal
familiarity in the reader with Banach lattices and order ideals in
Banach lattices, the reference \cite{Schaefer} goes far beyond what
we require. 

We define $\N:=\{1,2,3,\ldots\}$ and $\N_{0}:=\N\cup\{0\}$. For
$a,b\in\N_{0}$ with $a\leq b$ we will write $[a,b]:=\{a,a+1,\ldots,b-1,b\}$
and $[a,b):=\{a,a+1,\ldots,b-1\}$. For a set $K\subseteq\N_{0}$
and $a\in\N_{0}$ we define $K+a:=\set{k+a}{k\in K}$ and $aK:=\set{ak}{k\in K}.$
We denote the cardinality of a set $B$ by $\abs B.$
\begin{defn}
A subset $\ideal$ of $2^{\N}$ is called an \emph{ideal in $\N$
}if:
\begin{enumerate}
\item If $A,B\in\ideal$, then $A\cup B\in\ideal$.
\item If $A\in\ideal$ and $B\subseteq A$, then $B\in\ideal$.
\end{enumerate}
\end{defn}

Let $\ideal$ be an ideal in $\N$. For $C,D\subseteq\N$, by $C\subseteq D\mod\ideal$
we will mean that there exists some $I\in\ideal$ so that $C\subseteq D\cup I$
and by $C=D\mod\ideal$ we mean that both $C\subseteq D\mod\ideal$
and $D\subseteq C\mod\ideal$ hold. We will say $C$ and $D$ are
$\ideal$\emph{-disjoint }if $C\cap D\in\ideal$. Let $\kappa$ be
any ordinal and $\{A_{\alpha}\}_{\alpha\in\kappa}$ a (possibly transfinite)
sequence of subsets of $\N$. We will say that $\{A_{\alpha}\}_{\alpha\in\kappa}$
is \emph{$\ideal$-increasing} if $\alpha<\beta<\kappa$ implies $A_{\alpha}\subseteq A_{\beta}\mod\ideal$.
We will say that $\{A_{\alpha}\}_{\alpha\in\kappa}$ is \emph{strictly
$\ideal$-increasing }if $\{A_{\alpha}\}_{\alpha\in\kappa}$ is $\ideal$-increasing
and $\alpha<\beta<\kappa$ implies $A_{\beta}\backslash A_{\alpha}\notin\ideal$.
The terms \emph{$\ideal$-decreasing }and \emph{strict $\ideal$-decreasing}
are defined similarly.

Let $\ideal$ be any ideal in $\N$. We will say that a sequence $\{r_{n}\}_{n\in\N}\subseteq\R$
\emph{$\ideal$-converges to $r\in\R$, }if, for every $\varepsilon>0$,
we have $\{n\in\N:\abs{r_{n}-r}\geq\varepsilon\}\in\ideal$, and we
will write 
\[
\ideallimit{\ideal}_{n\to\infty}r_{n}=r.
\]

As is usual, we denote the space of all bounded sequences in $\R$
by $\ellinfty$, endowed with the $\norm{\cdot}_{\infty}$-norm.
\begin{defn}
Let $\ideal$ be any ideal in $\N$. The closed subspace of $\ellinfty$
of all bounded sequences that $\ideal$-converge to zero will be denoted
$\idealczero{\ideal}:=\set{\xi\in\ellinfty}{\ideallimit{\ideal}_{n\to\infty}\xi_{n}=0}$.
With $\finideal$ denoting the ideal of all finite subsets of $\N$,
as is usual, we will write $c_{0}$ for $\finideal c_{0}$.
\end{defn}

Clearly, for ideals $\ideal$ and $\jdeal$ in $\N$, if $\ideal\subseteq\jdeal$,
then $\idealczero{\ideal}\subseteq\idealczero{\jdeal}$. Further,
since $\ideal c_{0}$ is an order ideal \cite[Definition II.2.1]{Schaefer}
in the Banach lattice $\jdeal c_{0}$, by \cite[Proposition II.5.4]{Schaefer},
the quotient $\jdeal c_{0}/\ideal c_{0}$ is also a Banach lattice.

The following lemma is well known, with an elegant proof from \cite{Whitley},
attributed by Whitley to Kruse.
\begin{lem}
\label{lem:uncountable-pairwise-f-disjoint}There exists an uncountable
collection of pairwise $\finideal$-disjoint infinite subsets of $\N$.
\end{lem}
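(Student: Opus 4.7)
The plan is to transfer a natural combinatorial structure on a countable set to $\N$ through a bijection, and the cleanest vehicle is the complete infinite binary tree. Let $T:=\bigcup_{n\in\N_{0}}\{0,1\}^{n}$ denote the (countable) set of all finite binary strings and fix any bijection $\phi:T\to\N$. For each infinite binary sequence $\sigma\in\{0,1\}^{\N}$, write $\sigma|_{n}\in T$ for its length-$n$ initial segment and define
\[
B_{\sigma}:=\set{\phi(\sigma|_{n})}{n\in\N_{0}}\subseteq\N.
\]
Since the strings $\sigma|_{0},\sigma|_{1},\sigma|_{2},\ldots$ have pairwise distinct lengths, they are pairwise distinct elements of $T$, so each $B_{\sigma}$ is an infinite subset of $\N$.

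Next, for any two distinct $\sigma,\tau\in\{0,1\}^{\N}$, let $N$ be the least index at which they disagree; then $\sigma|_{m}=\tau|_{m}$ if and only if $m\leq N$, whence
\[
B_{\sigma}\cap B_{\tau}=\set{\phi(\sigma|_{m})}{0\leq m\leq N}
\]
is finite, i.e.\ $B_{\sigma}$ and $B_{\tau}$ are $\finideal$-disjoint. Since $\abs{\{0,1\}^{\N}}$ is uncountable, the family $\curly{B_{\sigma}}_{\sigma\in\{0,1\}^{\N}}$ is an uncountable collection of pairwise $\finideal$-disjoint infinite subsets of $\N$, as required.

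I do not anticipate any genuine obstacle: the lemma amounts to a single short combinatorial construction, and the only slight subtlety is recognizing that $\finideal$-disjointness is precisely the requirement that pairwise intersections be finite, which is built into the branching behaviour of the tree. An equivalent and equally short route would be to enumerate $\Q$ as $\curly{q_{n}}_{n\in\N}$ and, for each $r\in\R$, let $B_{r}\subseteq\N$ collect the indices of a sequence of distinct rationals converging to $r$; eventual separation of limits forces $B_{r}\cap B_{s}$ to be finite whenever $r\neq s$, and $\R$ is uncountable. Either approach relies on nothing beyond the countability of a naturally indexed set together with the uncountability of $2^{\N}$.
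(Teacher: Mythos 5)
Your argument is correct, and it is worth noting that the paper does not actually reproduce a proof of this lemma; it simply cites Whitley's short note, which attributes the construction to Kruse. The Whitley--Kruse argument is precisely the second route you sketch at the end: enumerate $\Q$, attach to each real number the index set of a rational sequence converging to it, and observe that distinct limits force finite overlaps. Your primary argument is genuinely different in flavour: rather than leaning on the order topology of $\R$, you work purely combinatorially with the complete binary tree, taking branches (infinite $0$--$1$ sequences) and mapping each branch to the set of its initial segments under a fixed bijection $\phi:T\to\N$. The finiteness of the pairwise intersections then falls out of the tree structure---two distinct branches share only finitely many initial segments---with no appeal to convergence or continuity. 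Both constructions produce an almost-disjoint family of size $2^{\aleph_0}$; the tree version is more self-contained and makes no use of any ambient metric structure, while the rationals version is shorter to state if one takes $\R$ and $\Q$ as given. The only cosmetic point worth checking in your write-up is the off-by-one accounting in ``$\sigma|_{m}=\tau|_{m}$ if and only if $m\leq N$'': whether it is $m\leq N$ or $m<N$ depends on whether $\sigma|_{n}$ covers positions $0,\ldots,n-1$ or $1,\ldots,n$, but either convention yields a finite intersection, so the proof stands.
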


The following result is mentioned without proof in \cite[Theorem~4.2]{Kalton2011}.
Since our subsequent results rely on it, we provide an explicit proof.
\begin{lem}
\label{lem:strict-f-decreasing-transfinite-sequence-in-N-exist}Let
$\Omega$ denote the first uncountable ordinal. There exists a strictly
$\finideal$-decreasing transfinite sequence of infinite subsets of
$\N$ indexed by $\Omega$. Also, there exists a strictly $\finideal$-increasing
transfinite sequence of infinite subsets of $\N$ indexed by $\Omega$.
\end{lem}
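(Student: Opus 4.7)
The plan is to construct the strictly $\finideal$-decreasing transfinite sequence $\{A_\alpha\}_{\alpha<\Omega}$ of infinite subsets of $\N$ by transfinite recursion on $\alpha$, maintaining throughout the two invariants (i) $A_\alpha$ is infinite, and (ii) for every pair $\gamma<\delta\leq\alpha$ we have $A_\delta\subseteq A_\gamma\mod\finideal$ while $A_\gamma\setminus A_\delta\notin\finideal$. Set $A_0:=2\N$ (an infinite set with infinite complement, which will be convenient for the increasing case below). At a successor stage $\alpha=\gamma+1$, split the infinite set $A_\gamma$ into two disjoint infinite subsets $B\sqcup C=A_\gamma$ and declare $A_{\gamma+1}:=B$; both invariants are immediate.

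The substantive step is the limit case: let $\alpha<\Omega$ be a countable limit ordinal. Choose, using countability of $\alpha$, a strictly increasing cofinal sequence $\beta_0<\beta_1<\beta_2<\cdots$ with $\sup_n\beta_n=\alpha$. By the invariants so far, the finite intersections $A_{\beta_0}\cap\cdots\cap A_{\beta_n}$ agree with $A_{\beta_n}$ modulo a finite set, hence remain infinite. One can therefore recursively pick integers $a_0<a_1<a_2<\cdots$ with $a_n\in A_{\beta_0}\cap\cdots\cap A_{\beta_n}$; set $A_\alpha:=\{a_n:n\in\N_0\}$. Then $A_\alpha\setminus A_{\beta_k}\subseteq\{a_0,\ldots,a_{k-1}\}$ is finite for every $k$, so $A_\alpha\subseteq A_{\beta_k}\mod\finideal$, and by cofinality $A_\alpha\subseteq A_\beta\mod\finideal$ for every $\beta<\alpha$.

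The delicate part of this limit step, and the main obstacle of the construction, is verifying strictness, namely that $A_\beta\setminus A_\alpha$ is infinite for every $\beta<\alpha$. The idea is to exploit the "ring" $A_{\beta_k}\setminus A_{\beta_{k+1}}$, which is infinite by the inductive strictness invariant. Since $a_n\in A_{\beta_{k+1}}$ for all $n\geq k+1$, the intersection $A_\alpha\cap(A_{\beta_k}\setminus A_{\beta_{k+1}})$ is contained in the finite set $\{a_0,\ldots,a_k\}$, so $A_{\beta_k}\setminus A_\alpha$ contains an infinite set. For an arbitrary $\beta<\alpha$, choose $k$ with $\beta<\beta_k$; then $A_{\beta_k}\subseteq A_\beta\mod\finideal$ by the invariant, so $A_\beta\setminus A_\alpha\supseteq(A_{\beta_k}\setminus A_\alpha)\setminus F$ for a finite $F$, which is still infinite. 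This closes the recursion.

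Finally, for the strictly $\finideal$-increasing transfinite sequence, the plan is to pass to complements: define $A'_\alpha:=\N\setminus A_\alpha$. Since $A\subseteq B\mod\finideal$ is equivalent to $\N\setminus B\subseteq\N\setminus A\mod\finideal$, and similarly for set-difference being infinite, the family $\{A'_\alpha\}_{\alpha<\Omega}$ is automatically strictly $\finideal$-increasing; the only extra bookkeeping is ensuring each $A'_\alpha$ is infinite, which is guaranteed because $A'_0=\N\setminus 2\N$ is infinite and each successor step above chooses $A_{\gamma+1}$ as a proper subset of $A_\gamma$ (so the complement only grows).
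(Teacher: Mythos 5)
Your proof is correct and follows essentially the same route as the paper: recursion on $\Omega$ with a diagonal/cofinal choice at limit stages, with strictness propagated from the inductive hypothesis, and the increasing sequence obtained by taking complements starting from $A_0=2\N$. One small imprecision in your last paragraph: since the limit step produces $A_\alpha$ that is contained in the earlier sets only modulo $\finideal$, the complement does not literally grow along the sequence; the correct (and equally easy) justification that each $\N\setminus A_\alpha$ is infinite is that $A_\alpha\subseteq 2\N\mod\finideal$, so $\N\setminus A_\alpha$ contains all but finitely many odd numbers.
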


\begin{proof}
Through transfinite recursion, we construct a transfinite sequence
$\{B_{\alpha}\}_{\alpha\in\Omega}$ of strictly $\finideal$-decreasing
infinite subsets of $\N$.

Let $B_{0}:=2\N$.

Let $\kappa\in\Omega$ and assume, that $\{B_{\alpha}\}_{\alpha<\kappa}$
has been defined to be strictly $\finideal$-decreasing. For convenience,
let $\{n_{j}^{(\alpha)}\}_{j\in\N}\subseteq\N$ be a strictly increasing
sequence so that, for all $\alpha<\kappa$, we have $B_{\alpha}=\{n_{j}^{(\alpha)}:j\in\N\}.$

If $\kappa$ is a successor, let $\lambda:=\max\{\alpha|\alpha<\kappa\}$,
so that $\lambda+1=\kappa$. We set 
\[
B_{\kappa}:=\{n_{2j}^{(\lambda)}\in B_{\lambda}:j\in\N\}.
\]
 Clearly $B_{\kappa}$ is infinite. For all $\alpha<\kappa$, we have
$B_{\lambda}\backslash B_{\kappa}\notin\finideal$ and $B_{\lambda}\subseteq B_{\alpha}\mod\finideal$,
and hence $B_{\lambda}\backslash B_{\kappa}\subseteq B_{\alpha}\backslash B_{\kappa}\mod\finideal$,
so that $B_{\alpha}\backslash B_{\kappa}\notin\finideal$.

If $\kappa$ is a limit ordinal, let $(\alpha_{m})_{m\in\N}\subseteq\Omega$
be a strictly increasing sequence of ordinals such that $\alpha_{m}<\kappa$
for all $m\in\N$ and $\sup\{\alpha_{m}:m\in\N\}=\kappa$. Let $p_{1}:=\min B_{\alpha_{1}}\in\N$.
Assuming, for $k\in\N$, that $p_{1}<p_{2}<\ldots<p_{k}$ have been
defined so that $p_{j}\in\bigcap_{r=1}^{j}B_{\alpha_{r}}$ for all
$j\in\{1,\ldots,k\}$, we define $p_{k+1}:=\min\{n\in\bigcap_{r=1}^{k+1}B_{\alpha_{r}}:n>p_{k}\}.$
We obtain a strictly increasing sequence $(p_{j})_{j\in\N}$ so that
$p_{j}\in\bigcap_{r=1}^{j}B_{\alpha_{r}}$ for all $j\in\N$. We define
the infinite set $B_{\kappa}:=\{p_{j}:j\in\N\}.$ For any $\alpha<\kappa\in\Omega$,
let $m\in\N$ satisfy $\alpha<\alpha_{m}<\kappa$, and let $F\in\finideal$
be such that $B_{\alpha_{m}}\subseteq B_{\alpha}\cup F$. Then 
\[
B_{\kappa}\subseteq B_{\alpha_{m}}\cup\{p_{1},\ldots,p_{m-1}\}\subseteq B_{\alpha}\cup(F\cup\{p_{1},\ldots,p_{m-1}\}),
\]
 and hence $B_{\kappa}\subseteq B_{\alpha}\mod\finideal$. Also $B_{\alpha}\backslash B_{\alpha_{m}}\notin\finideal$,
and since $B_{\kappa}\subseteq B_{\alpha_{m}}\mod\finideal$, we have
$B_{\alpha}\backslash B_{\kappa}\notin\finideal$.

We obtain the transfinite sequence $\{B_{\alpha}\}_{\alpha\in\Omega}$
of strictly $\finideal$-decreasing infinite subsets of $\N$.

For $\alpha\in\Omega$, by setting $C_{\alpha}:=\N\backslash A_{\alpha}$,
we obtain a strictly $\finideal$-increasing transfinite sequence
$\{C_{\alpha}\}_{\alpha\in\Omega}$ of infinite sets.
\end{proof}
\begin{prop}
\cite[Theorem~4.2]{Kalton2011} The quotient $\ellinfty/c_{0}$ does
not have the Monotone Transfinite Sequence Property and hence its
closed unit ball $\closedball{\ellinfty/c_{0}}$ cannot embed uniform
continuously into $\ellinfty$. In particular, the quotient map $q:\ellinfty\to\ellinfty/c_{0}$
cannot have a uniformly continuous right inverse.
\end{prop}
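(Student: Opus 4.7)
The plan is to produce a strictly monotone increasing transfinite sequence in $\ellinfty/c_{0}$ indexed by $\Omega$, which directly witnesses the failure of the Monotone Transfinite Sequence Property, and then to invoke Kalton's Monotone Transfinite Sequence Theorem to rule out a uniformly continuous right inverse of $q$.

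First, I would apply Lemma~\ref{lem:strict-f-decreasing-transfinite-sequence-in-N-exist} to obtain a strictly $\finideal$-increasing transfinite sequence $\{C_{\alpha}\}_{\alpha\in\Omega}$ of infinite subsets of $\N$. Then I would set $\xi_{\alpha}:=q(\chi_{C_{\alpha}})\in\ellinfty/c_{0}$, where $\chi_{C_{\alpha}}\in\ellinfty$ is the characteristic function of $C_{\alpha}$. The condition $C_{\alpha}\subseteq C_{\beta}\mod\finideal$ for $\alpha<\beta$ means $\chi_{C_{\beta}}-\chi_{C_{\alpha}}$ differs from a nonnegative sequence by a finitely supported (hence $c_{0}$) correction, so $\xi_{\alpha}\leq\xi_{\beta}$ in the Banach lattice $\ellinfty/c_{0}$. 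Conversely, the strictness condition $C_{\beta}\backslash C_{\alpha}\notin\finideal$ says that $\chi_{C_{\beta}}-\chi_{C_{\alpha}}$ equals $1$ on an infinite set and therefore does not lie in $c_{0}$, so $\xi_{\alpha}\ne\xi_{\beta}$. Thus $\{\xi_{\alpha}\}_{\alpha\in\Omega}$ is a strictly increasing transfinite sequence in $\ellinfty/c_{0}$, which is certainly not eventually constant. Hence $\ellinfty/c_{0}$ fails the Monotone Transfinite Sequence Property.

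By the contrapositive of Kalton's Monotone Transfinite Sequence Theorem (Theorem~\ref{thm:Kalton's-Monotone-Transfinite-seq-thm}), the closed unit ball $\closedball{\ellinfty/c_{0}}$ cannot embed uniformly continuously into $\ellinfty$. Finally, suppose for contradiction that $q:\ellinfty\to\ellinfty/c_{0}$ admitted a uniformly continuous right inverse $\tau:\ellinfty/c_{0}\to\ellinfty$. Then $\tau\big|_{\closedball{\ellinfty/c_{0}}}:\closedball{\ellinfty/c_{0}}\to\ellinfty$ would be uniformly continuous, and its inverse on $\tau(\closedball{\ellinfty/c_{0}})$ is given by the restriction of $q$, which is $1$-Lipschitz and in particular uniformly continuous. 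This exhibits a uniformly continuous embedding of $\closedball{\ellinfty/c_{0}}$ into $\ellinfty$, contradicting the previous conclusion.

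The only genuinely delicate point is the verification that the lifted sequence $\{\xi_{\alpha}\}$ is strictly monotone in $\ellinfty/c_{0}$; this reduces mechanically to the two clauses in the definition of strict $\finideal$-increase, so once Lemma~\ref{lem:strict-f-decreasing-transfinite-sequence-in-N-exist} is in hand there is no real obstacle — the argument is just a direct application of Theorem~\ref{thm:Kalton's-Monotone-Transfinite-seq-thm}.
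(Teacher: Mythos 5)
Your proof is correct and follows essentially the same route as the paper's: obtain a strictly $\finideal$-increasing transfinite sequence of infinite subsets of $\N$ from Lemma~\ref{lem:strict-f-decreasing-transfinite-sequence-in-N-exist}, map through $q$ applied to characteristic functions to produce a strictly increasing, not eventually constant transfinite sequence in $\ellinfty/c_{0}$, and conclude via Kalton's theorem. You simply spell out the monotonicity and strictness verification and the final ``in particular'' implication in more detail than the paper, which states them as immediate.
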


\begin{proof}
By Lemma~\ref{lem:strict-f-decreasing-transfinite-sequence-in-N-exist},
there exists a strictly $\finideal$-increasing transfinite sequence
of infinite subsets of $\N$, denoted by $\{B_{\alpha}\}_{\alpha\in\Omega}$.
With $q:\ellinfty\to\ellinfty/c_{0}$ denoting the usual quotient
map and $1_{A}$ the characteristic function of a set $A\in\N$, the
transfinite sequence $\{q(1_{B_{\alpha}})\}_{\alpha\in\Omega}$ is
increasing, but is not eventually constant in the Banach lattice $\ellinfty/c_{0}$.
By Kalton's Monotone Transfinite Sequence Theorem (Theorem~\ref{thm:Kalton's-Monotone-Transfinite-seq-thm}),
the unit ball $\closedball{\ellinfty/c_{0}}$ cannot embed uniformly
continuously into $\ellinfty$. In particular, the quotient map $q:\ellinfty\to\ellinfty/c_{0}$
cannot have a uniformly continuous right inverse.
\end{proof}

\section{Much larger inclusions of ideals in $\protect\N$\label{sec:4 Much larger inclusions of ideals in =00005CN}}

In this section we introduce the strict partial order that we will
term `much larger inclusion' (denoted by `$\Subset$') of ideals of
$\N$. Our main result, Corollary~\ref{cor:jc0 =00005Cto jc0/ic0 has no uniform cont right inverse}
to Lemmas~\ref{lem:muchlarger_implies_transfinite_stirct_i_increasing}
and~\ref{lem:jc0/ic0 does not have MTSP}, is the observation that
if $\ideal\Subset\jdeal$, then the quotient map $\jdeal c_{0}\to\jdeal c_{0}/\ideal c_{0}$
admits no uniformly continuous right inverse.

Before making this observation, we include some general remarks on
much larger inclusions. Firstly, that `much larger inclusion' is different
from set inclusion (cf. Remark~\ref{rem:much-larger-inclusion-different-from-inclusion}).
Secondly, in Propositions~\ref{prop:inbetween-lemma} and~\ref{prop:maximal-much-larger-chains},
through a kind of ``bootstrapping'' of Lemmas~\ref{lem:uncountable-pairwise-f-disjoint}
and~\ref{lem:strict-f-decreasing-transfinite-sequence-in-N-exist},
we show that ideals in $\N$ related by `much larger inclusion' occur
abundance.
\begin{defn}
\label{def:much-larger}Let $\ideal$ and $\jdeal$ be ideals in $\N$.
We will say that \emph{$\ideal$ is much smaller than $\jdeal$} (or
\emph{$\jdeal$ is much larger than $\ideal$}) and write\emph{ $\ideal\Subset\jdeal$,
}if $\ideal\subsetneq\jdeal$ and there exists a countable collection
of non-empty pairwise disjoint sets $\{D_{j}\}_{j\in\N}\subseteq\ideal$
so that, for every infinite subset $K\subseteq\N$, we have that $\bigcup_{k\in K}D_{k}\in\jdeal\backslash\ideal.$
\end{defn}

We note that the relation `$\Subset$' is a strict partial order on
the set of ideals in $\N$.
\begin{rem}
\label{rem:much-larger-inclusion-different-from-inclusion}Strict
inclusion of ideals in $\N$ does not imply much larger inclusion
of ideals in $\N$: Let $\jdeal$ be an ideal in $\N$ that contains
a non-empty set $A\in\jdeal$. Fix some $a\in A$ and define the ideal
$\ideal:=\set{B\in\jdeal}{a\notin B}\subseteq\jdeal$. Since $A\notin\ideal$,
we have $\ideal\subsetneq\jdeal$. By definition of $\ideal$, for
any set $C\in\jdeal\backslash\ideal$, we have $a\in C$. However,
no set in $\ideal$ contains $a$, and therefore no (arbitrary) union
of sets from $\ideal$ can be an element of $\jdeal\backslash\ideal$.
In particular we do not have $\ideal\Subset\jdeal$.
\end{rem}

\begin{prop}
\label{prop:inbetween-lemma}Let $\ideal$ and $\jdeal$ be ideals
in $\N$ with $\ideal\Subset\jdeal$. Then there exist uncountably
many pairwise incomparable ideals strictly between $\ideal$ and $\jdeal$
with respect to the strict partial order $\Subset$.
\end{prop}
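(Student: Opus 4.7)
The plan is to exploit the witness family associated with $\ideal\Subset\jdeal$ together with an uncountable collection of pairwise $\finideal$-disjoint infinite subsets of $\N$ (supplied by Lemma~\ref{lem:uncountable-pairwise-f-disjoint}) to manufacture the required ideals via a simple ``cut-off'' construction. Let $\{D_j\}_{j\in\N}\subseteq\ideal$ be a pairwise disjoint witnessing family for $\ideal\Subset\jdeal$, and let $\{M_\alpha\}_{\alpha\in A}$ be an uncountable pairwise $\finideal$-disjoint collection of infinite subsets of $\N$. Since two cofinite subsets of $\N$ cannot be $\finideal$-disjoint (their intersection is again cofinite), at most one $M_\alpha$ is cofinite, so I may discard it and assume $\N\setminus M_\alpha$ is infinite for every $\alpha\in A$. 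For each $\alpha\in A$ I would then set
\[
\kdeal_\alpha:=\set{B\in\jdeal}{B\cap\bigcup_{k\in\N\setminus M_\alpha}D_k\in\ideal}.
\]
A routine check (using that $\ideal$ is closed under subsets and finite unions) shows that each $\kdeal_\alpha$ is an ideal in $\N$ with $\ideal\subseteq\kdeal_\alpha\subseteq\jdeal$.

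To verify $\ideal\Subset\kdeal_\alpha\Subset\jdeal$, I would use the natural witnesses supplied by the $M_\alpha$-partition of $\N$. For $\ideal\Subset\kdeal_\alpha$, the witness is $\{D_k\}_{k\in M_\alpha}$: pairwise disjointness of the $D_j$ ensures that for any infinite $L\subseteq M_\alpha$ the set $\bigcup_{k\in L}D_k$ has empty intersection with $\bigcup_{k\in\N\setminus M_\alpha}D_k$, placing it in $\kdeal_\alpha$, while the $\Subset$-witness property places it outside $\ideal$. For $\kdeal_\alpha\Subset\jdeal$, the witness is $\{D_k\}_{k\in\N\setminus M_\alpha}$ (infinite by choice of $M_\alpha$): any infinite subunion lies in $\jdeal\setminus\ideal$ by the witness property, and fails the $\kdeal_\alpha$-membership condition tautologically.

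Finally, for pairwise $\Subset$-incomparability, I would fix $\alpha\neq\beta$ in $A$ and set $E_\alpha:=\bigcup_{k\in M_\alpha}D_k$, which lies in $\kdeal_\alpha$ since $E_\alpha\cap\bigcup_{k\in\N\setminus M_\alpha}D_k=\emptyset$. Because $M_\alpha$ and $M_\beta$ are $\finideal$-disjoint and both infinite, $M_\alpha\setminus M_\beta$ is infinite, and pairwise disjointness of the $D_j$ yields
\[
E_\alpha\cap\bigcup_{k\in\N\setminus M_\beta}D_k=\bigcup_{k\in M_\alpha\setminus M_\beta}D_k\in\jdeal\setminus\ideal,
\]
so $E_\alpha\notin\kdeal_\beta$. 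By symmetry neither of $\kdeal_\alpha,\kdeal_\beta$ contains the other, and since $\Subset$ requires strict set-theoretic inclusion this delivers $\Subset$-incomparability. The only real obstacle I anticipate is spotting the right definition of $\kdeal_\alpha$; once the $D_j$-sectoring of $\N$ is combined with the $\finideal$-disjointness of the $M_\alpha$, all four required properties fall out by routine bookkeeping with pairwise disjointness.
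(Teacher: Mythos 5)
Your proof is correct, and it reaches the result by a genuinely different construction than the paper's, though the scaffolding is the same: both proofs feed the $\Subset$-witness family $\{D_j\}$ and an uncountable pairwise $\finideal$-disjoint family into Lemma~\ref{lem:uncountable-pairwise-f-disjoint}, produce one intermediate ideal per member of the uncountable family, and then verify the two much-larger inclusions and pairwise set-incomparability. The differences lie in how the intermediate ideals are cut out. The paper first partitions each $L_i$ into two disjoint infinite halves $\{p_{i,j}\}_j$ and $\{q_{i,j}\}_j$, uses one half to build the ideal from below,
\[
\kdeal_i := \set{A\cup B}{A\in\ideal,\ B\subseteq\textstyle\bigcup_{j}D_{p_{i,j}}},
\]
and keeps the other half as the witness for $\kdeal_i\Subset\jdeal$. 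You instead let $M_\alpha$ itself play the role of the first half and $\N\setminus M_\alpha$ the role of the second, at the small cost of discarding the at-most-one cofinite member of the family so that $\N\setminus M_\alpha$ is infinite, and you define the ideal from above by a trace condition,
\[
\kdeal_\alpha := \set{B\in\jdeal}{B\cap\textstyle\bigcup_{k\in\N\setminus M_\alpha}D_k\in\ideal}.
\]
These are genuinely different objects: your $\kdeal_\alpha$ is typically strictly larger than the generated ideal, since it also absorbs any $B\in\jdeal\setminus\ideal$ that meets $\bigcup_j D_j$ trivially, whereas the paper's $\kdeal_i$ is the smallest ideal extending $\ideal$ by the designated block. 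Your trace-condition definition makes the ideal axioms and the containment $\ideal\subseteq\kdeal_\alpha\subseteq\jdeal$ entirely mechanical and lets all four required properties (two much-larger inclusions and two failures of set inclusion for incomparability) follow from the same disjointness bookkeeping; the paper's generated-ideal definition is more concrete but requires a short decomposition argument each time one needs to show something does not lie in $\kdeal_i$. The one point you should state a bit more explicitly is that $\{D_k\}_{k\in M_\alpha}$ and $\{D_k\}_{k\in\N\setminus M_\alpha}$, after a relabeling by $\N$, are legitimate countable witness families in the sense of Definition~\ref{def:much-larger}, and that the required strict inclusions $\ideal\subsetneq\kdeal_\alpha\subsetneq\jdeal$ are delivered by the witness verifications themselves; but these are the routine checks you flagged, and they do go through.
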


\begin{proof}
Let $\{D_{j}\}_{j\in\N}\subseteq\ideal$ be a countable collection
of pairwise disjoint nonempty sets satisfying $\bigcup_{j\in K}D_{j}\in\jdeal\backslash\ideal$
for all infinite sets $K\subseteq\N$. By Lemma~\ref{lem:uncountable-pairwise-f-disjoint},
let $\{L_{i}\}_{i\in I}\subseteq2^{\N}$ be an uncountable collection
of pairwise $\finideal$-disjoint infinite subsets of $\N$.

Let $i\in I$ be arbitrary. Choose strictly increasing disjoint sequences
$\{p_{i,j}\}_{j\in\N}\subseteq\N$ and $\{q_{i,j}\}_{j\in\N}\subseteq\N$
that form a partition of $L_{i}$, i.e., $L_{i}=\{p_{i,j}\}_{j\in\N}\cup\{q_{i,j}\}_{j\in\N}$.
We define the ideal 
\[
\kdeal_{i}:=\set{A\cup B\subseteq\N}{A\in\ideal,\ B\subseteq\bigcup_{j\in\N}D_{p_{i,j}}}.
\]
Consider the countable collection of pairwise disjoint non-empty sets
$\{D_{p_{i,j}}\}_{j\in\N}\subseteq\ideal$. Let $K\subseteq\N$ be
any infinite subset of $\N$. By definition of $\{D_{j}\}_{j\in\N}\subseteq\ideal$
and $\kdeal_{i}$, we have $\bigcup_{j\in K}D_{p_{i,j}}\in\kdeal_{i}\backslash\ideal$.
Therefore $\ideal\Subset\kdeal_{i}$.

On the other hand, consider the countable collection of pairwise disjoint
non-empty sets $\{D_{q_{i,j}}\}_{j\in\N}\subseteq\ideal\subseteq\kdeal_{i}$.
Let $K\subseteq\N$ be any infinite subset of $\N$. We note, by the
fact that $\{D_{j}\}_{j\in\N}$ are pairwise disjoint and that the
sequences $\{p_{i,j}\}_{j\in\N}\subseteq\N$ and $\{q_{i,j}\}_{j\in\N}\subseteq\N$
are disjoint, that the sets $\bigcup_{j\in K}D_{q_{i,j}}\in\jdeal$
and $\bigcup_{j\in\N}D_{p_{i,j}}\in\kdeal_{i}$ are disjoint. So,
if it were the case that $\bigcup_{j\in K}D_{q_{i,j}}\in\kdeal_{i}$,
then, by definition of $\kdeal_{i}$, we must necessarily have $\bigcup_{j\in K}D_{q_{i,j}}\in\ideal$,
which, contradicts $\bigcup_{j\in K}D_{q_{i,j}}\in\jdeal\backslash\ideal$.
Therefore $\bigcup_{j\in K}D_{q_{i,j}}\in\jdeal\backslash\kdeal_{i}$.
Since $K\subseteq\N$ was chosen arbitrarily as an infinite set, we
obtain $\kdeal_{i}\Subset\jdeal$.

Now let $i_{0},i_{1}\in I$ be distinct. We claim that $\kdeal_{i_{0}}$
and $\kdeal_{i_{1}}$ are not comparable with respect to the strict
partial order $\Subset$. Since $\{L_{i}\}_{i\in I}\subseteq2^{\N}$
is a collection of pairwise $\finideal$-disjoint sets, $\{p_{i_{0},j}\}_{j\in\N}\cap\{p_{i_{1},j}\}_{j\in\N}\subseteq L_{i_{0}}\cap L_{i_{1}}\in\finideal$.
Therefore there exists some $J\in\N$, such that $\{p_{i_{0},j}\}_{j\geq J}$
and $\{p_{i_{1},j}\}_{j\geq J}$ are disjoint, and, since the elements
of $\{D_{j}\}_{j\in\N}\subseteq\ideal$ are pairwise disjoint, the
sets $\bigcup_{j=J}^{\infty}D_{p_{i_{0},j}}\in\kdeal_{i_{0}}$ and
$\bigcup_{j=J}^{\infty}D_{p_{i_{1},j}}\in\kdeal_{i_{1}}$ are thus
disjoint. Therefore, if it were the case that $\bigcup_{j=J}^{\infty}D_{p_{i_{0},j}}\in\kdeal_{i_{1}}$,
then there necessarily exists some $A\in\ideal$ such that $\bigcup_{j=J}^{\infty}D_{p_{i_{0},j}}\subseteq A\cup\bigcup_{j=1}^{J-1}D_{p_{i_{1},j}}$.
Since the elements of $\{D_{j}\}_{j\in\N}\subseteq\ideal$ are pairwise
disjoint, there must exist an infinite set $K\subseteq\N$ so that
$\bigcup_{j\in K}D_{p_{i_{0},j}}\subseteq A\in\ideal$. But this contradicts
$\bigcup_{j\in K}D_{p_{i_{0},j}}\in\jdeal\backslash\ideal$. Therefore
$\bigcup_{j=J}^{\infty}D_{p_{i_{0},j}}\in\kdeal_{i_{0}}$, while $\bigcup_{j=J}^{\infty}D_{p_{i_{0},j}}\notin\kdeal_{i_{1}}$.
Similarly, $\bigcup_{j=J}^{\infty}D_{p_{i_{1},j}}\in\kdeal_{i_{1}}$,
while $\bigcup_{j=J}^{\infty}D_{p_{i_{1},j}}\notin\kdeal_{i_{0}}$.

The elements of the uncountable collection $\{\kdeal_{i}\}_{i\in I}$
are thus pairwise incomparable, and lie between $\ideal$ and $\jdeal$
with respect to the strict partial order $\Subset$.
\end{proof}
\begin{defn}
\label{def:much-larger-chain}By a $\Subset$-chain, we mean a set
of ideals in $\N$ which is strictly totally ordered by the strict
partial order $\Subset$.
\end{defn}

\begin{prop}
\label{prop:maximal-much-larger-chains}Let $\ideal$ and $\jdeal$
be ideals in $\N$, with $\ideal\Subset\jdeal$. There exists a maximal
$\Subset$-chain having $\ideal$ as the least element and $\jdeal$
as greatest element and any such a maximal $\Subset$-chain is uncountable.
In particular, there exists a maximal $\Subset$-chain having $\finideal$
as the least element and $2^{\N}$ as greatest element and any such
maximal $\Subset$-chain is uncountable.
\end{prop}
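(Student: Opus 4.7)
Existence is handled by a routine Zorn's lemma argument: consider the family $\mathfrak{F}$ of all $\Subset$-chains of ideals in $\N$ containing both $\ideal$ and $\jdeal$, ordered by inclusion. Any $\subseteq$-totally ordered subfamily of $\mathfrak{F}$ has its union as an upper bound in $\mathfrak{F}$ (a union of $\subseteq$-chains of $\Subset$-chains is again a $\Subset$-chain), so Zorn yields a maximal $\Subset$-chain $\mathcal{C}$ with least element $\ideal$ and greatest element $\jdeal$.

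For uncountability, I would suppose for contradiction that $|\mathcal{C}|\le\aleph_{0}$. Proposition~\ref{prop:inbetween-lemma} first forces $\mathcal{C}$ to be $\Subset$-densely ordered: if $\kdeal_{1}\Subset\kdeal_{2}$ were $\Subset$-consecutive in $\mathcal{C}$, the proposition would supply an intermediate $\kdeal$ with $\kdeal_{1}\Subset\kdeal\Subset\kdeal_{2}$, and $\mathcal{C}\cup\{\kdeal\}$ would contradict maximality. Since $\mathcal{C}$ is then a countable densely linearly ordered set with endpoints $\ideal$ and $\jdeal$, Cantor's classical isomorphism theorem gives an order isomorphism $\mathcal{C}\cong[0,1]\cap\mathbb{Q}$; in particular $\mathcal{C}$ admits continuum many Dedekind cuts $(L,U)$ corresponding to irrational points of $[0,1]$, each with $L$ having no maximum and $U$ having no minimum.

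For each such cut I would then define
\[
\kdeal_{(L,U)}\ :=\ \bigcap_{\kdeal\in U}\kdeal,
\]
which is an ideal of $\N$ satisfying $\kdeal'\subseteq\kdeal_{(L,U)}\subseteq\kdeal''$ for every $\kdeal'\in L$ and $\kdeal''\in U$, with strict containments forced by the absence of extrema in $L$ and $U$. The lower $\Subset$-relation $\kdeal'\Subset\kdeal_{(L,U)}$ is immediate: choosing $\kdeal_{*}\in L$ with $\kdeal'\Subset\kdeal_{*}$ (available since $L$ has no maximum), the original witnesses $\{D_{j}\}\subseteq\kdeal'$ for $\kdeal'\Subset\kdeal_{*}$ still witness $\kdeal'\Subset\kdeal_{(L,U)}$, because their infinite subunions land in $\kdeal_{*}\subseteq\kdeal_{(L,U)}$. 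The upper relation $\kdeal_{(L,U)}\Subset\kdeal''$ is the main technical obstacle, since the witnesses for $\kdeal_{*}\Subset\kdeal''$ (for $\kdeal_{*}\in U$ below $\kdeal''$) naturally lie in $\kdeal_{*}$ but not in the smaller ideal $\kdeal_{(L,U)}$. Resolving this requires a careful construction, using a $\Subset$-decreasing sequence $\{\kdeal_{\beta_{n}}\}\subseteq U$ cofinal downward in $U$ (so that $\bigcap_{n}\kdeal_{\beta_{n}}=\kdeal_{(L,U)}$) and producing pairwise disjoint witnesses inside $\ideal\subseteq\kdeal_{(L,U)}$ whose infinite subunions can be forced into $\kdeal''\setminus\kdeal_{(L,U)}$.

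Once $\Subset$-comparability of $\kdeal_{(L,U)}$ with every element of $\mathcal{C}$ is established, maximality of $\mathcal{C}$ forces $\kdeal_{(L,U)}\in\mathcal{C}$. Distinct irrational cuts $r<r'$ yield distinct ideals, since any rational $s\in(r,r')$ corresponds to some $\kdeal_{s}\in\mathcal{C}$ satisfying $\kdeal_{(L_{r},U_{r})}\Subset\kdeal_{s}\Subset\kdeal_{(L_{r'},U_{r'})}$, so continuum many elements embed into $\mathcal{C}$, contradicting countability. The ``in particular'' assertion follows by applying the general statement with $\ideal:=\finideal$ and $\jdeal:=2^{\N}$, after verifying $\finideal\Subset2^{\N}$ via the singleton witnesses $D_{j}:=\{j\}\in\finideal$, whose infinite subunions $\bigcup_{k\in K}\{k\}=K$ are infinite subsets of $\N$, hence elements of $2^{\N}\setminus\finideal$.
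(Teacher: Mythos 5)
Your existence argument via Zorn coincides with the paper's, but your uncountability argument takes a genuinely different (and, as you yourself flag, incomplete) route. The paper does not pass through density and Dedekind cuts at all: it takes the witnesses $\{D_{j}\}_{j\in\N}\subseteq\ideal$ for $\ideal\Subset\jdeal$ together with the strictly $\finideal$-increasing transfinite sequence $\{C_{\alpha}\}_{\alpha\in\Omega}$ from Lemma~\ref{lem:strict-f-decreasing-transfinite-sequence-in-N-exist}, and sets
\[
\kdeal_{\alpha}:=\set{A\cup B\subseteq\N}{A\in\ideal,\ B\subseteq\bigcup_{j\in C_{\alpha}}D_{j}},\qquad\alpha\in\Omega.
\]
For $\beta<\gamma$ one picks an infinite $\{p_{j}\}\subseteq C_{\gamma}\backslash C_{\beta}$; the family $\{D_{p_{j}}\}$ then witnesses $\kdeal_{\beta}\Subset\kdeal_{\gamma}$, since each infinite subunion lies in $\kdeal_{\gamma}$ but is disjoint from $\bigcup_{j\in C_{\beta}}D_{j}$ and not in $\ideal$. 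This produces, directly, an uncountable $\Subset$-chain running from $\ideal$ to $\jdeal$, which is what the paper uses to conclude.

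The gap in your argument is real, not merely cosmetic. Writing $\kdeal_{(L,U)}:=\bigcap_{\kdeal\in U}\kdeal$, the lower relation $\kdeal'\Subset\kdeal_{(L,U)}$ indeed comes for free as you say, but the upper relation $\kdeal_{(L,U)}\Subset\kdeal''$ is precisely what you would need and precisely what you do not prove. Note that $\Subset$ is strictly stronger than $\subsetneq$ (Remark~\ref{rem:much-larger-inclusion-different-from-inclusion}), and it is not at all clear that the intersection along a $\Subset$-decreasing sequence $\{\kdeal_{\beta_{n}}\}\subseteq U$ is itself $\Subset$-below any $\kdeal_{\beta_{n}}$: the witnessing families for $\kdeal_{\beta_{n+1}}\Subset\kdeal_{\beta_{n}}$ live in $\kdeal_{\beta_{n+1}}$, not in the intersection, and there is no obvious way to diagonalize them into a single disjoint family inside $\bigcap_{n}\kdeal_{\beta_{n}}$ whose infinite subunions still escape $\bigcap_{n}\kdeal_{\beta_{n}}$. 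Without that step the purported element $\kdeal_{(L,U)}$ need not be $\Subset$-comparable to the elements of $\mathcal{C}$, so maximality gives you nothing, and the whole Cantor-isomorphism scaffold is idle. You should replace this with the paper's direct construction of an uncountable chain sketched above.
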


\begin{proof}
That a maximal $\Subset$-chain exists between $\ideal$ and $\jdeal$
is a straightforward application of Zorn's lemma.

Let $\{D_{j}\}_{j\in\N}\subseteq\ideal$ be a countable collection
of nonempty pairwise disjoint subset so that for any infinite set
$K\subseteq\N$ we have $\bigcup_{j\in K}D_{j}\in\jdeal\backslash\ideal$.
By Lemma~\ref{lem:strict-f-decreasing-transfinite-sequence-in-N-exist}
there exists a transfinite strictly $\finideal$-increasing sequence
$\{C_{\alpha}\}_{\alpha\in\Omega}$ of infinite subsets of $\N$.
For each $\alpha\in\Omega$, define the ideal 
\[
\kdeal_{\alpha}:=\set{A\cup B\subseteq\N}{A\in\ideal,\ B\subseteq\bigcup_{j\in C_{\alpha}}D_{j}}.
\]
Let $\beta<\gamma<\Omega$. Since $C_{\gamma}\backslash C_{\beta}$
is infinite, there exists a strictly increasing sequence $\{p_{j}\}_{j\in\N}\subseteq C_{\gamma}\backslash C_{\beta}.$
We consider the countable collection of nonempty pairwise disjoint
sets $\{D_{p_{j}}\}_{j\in\N}\subseteq\ideal\subseteq\kdeal_{\beta}$.
Let $K\subseteq\N$ be any infinite set. It is clear that $\bigcup_{j\in K}D_{p_{j}}\subseteq\bigcup_{j\in C_{\gamma}}D_{j}\in\kdeal_{\gamma}$,
while $\bigcup_{j\in K}D_{p_{j}}$ is disjoint from $\bigcup_{j\in C_{\beta}}D_{j}$.
Therefore, if it were the case that $\bigcup_{j\in K}D_{p_{j}}\in\kdeal_{\beta}$
then, it is necessarily true that $\bigcup_{j\in K}D_{p_{j}}\in\ideal$,
which is false. Therefore $\kdeal_{\beta}\Subset\kdeal_{\gamma}$.
Since $\Omega$ is uncountable, we conclude that any maximal $\Subset$-chain
of ideals must be uncountable.
\end{proof}
\begin{lem}
\label{lem:muchlarger_implies_transfinite_stirct_i_increasing}Let
$\ideal$ and $\jdeal$ be ideals in $\N$, with $\ideal\Subset\jdeal$.
There exists a strictly $\ideal$-increasing transfinite sequence
of infinite sets $\{B_{\alpha}\}_{\alpha\in\Omega}\subseteq\jdeal\backslash\ideal$.
\end{lem}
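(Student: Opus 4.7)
The plan is to combine the witness sequence of pairwise disjoint sets $\{D_j\}_{j\in\N}\subseteq\ideal$ coming from the definition of $\ideal\Subset\jdeal$ with the strictly $\finideal$-increasing transfinite sequence of infinite sets from Lemma~\ref{lem:strict-f-decreasing-transfinite-sequence-in-N-exist}, and to ``transport'' the latter from $\N$ into $\jdeal\setminus\ideal$ by indexed unions of the $D_j$'s. Concretely, let $\{D_j\}_{j\in\N}\subseteq\ideal$ be pairwise disjoint non-empty sets satisfying $\bigcup_{k\in K}D_k\in\jdeal\setminus\ideal$ for every infinite $K\subseteq\N$, and let $\{C_\alpha\}_{\alpha\in\Omega}$ be a strictly $\finideal$-increasing transfinite sequence of infinite subsets of $\N$ supplied by Lemma~\ref{lem:strict-f-decreasing-transfinite-sequence-in-N-exist}. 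Define
\[
B_\alpha := \bigcup_{j\in C_\alpha} D_j \qquad (\alpha\in\Omega).
\]

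The first easy checks are that each $B_\alpha$ is infinite (since $C_\alpha$ is infinite and the $D_j$'s are non-empty and pairwise disjoint) and that $B_\alpha\in\jdeal\setminus\ideal$ (directly from the defining property of $\{D_j\}_{j\in\N}$, applied to the infinite set $K=C_\alpha$). For the $\ideal$-increasing property, fix $\alpha<\beta<\Omega$. Since $\{C_\alpha\}_{\alpha\in\Omega}$ is $\finideal$-increasing, there is $F\in\finideal$ with $C_\alpha\subseteq C_\beta\cup F$, and then
\[
B_\alpha \subseteq B_\beta \cup \bigcup_{j\in F} D_j,
\]
where $\bigcup_{j\in F}D_j\in\ideal$ as a finite union of elements of $\ideal$; thus $B_\alpha\subseteq B_\beta\mod\ideal$.

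The main (and only subtle) step is strictness, that is, $B_\beta\setminus B_\alpha\notin\ideal$ whenever $\alpha<\beta$. Here pairwise disjointness of $\{D_j\}_{j\in\N}$ is essential: for every $j\in C_\beta\setminus C_\alpha$ the set $D_j$ is disjoint from every $D_{j'}$ with $j'\in C_\alpha$, so $D_j\cap B_\alpha=\emptyset$, and hence
\[
B_\beta\setminus B_\alpha \;\supseteq\; \bigcup_{j\in C_\beta\setminus C_\alpha} D_j.
\]
By strict $\finideal$-increasingness, $C_\beta\setminus C_\alpha$ is infinite, so the right-hand side lies in $\jdeal\setminus\ideal$ by the defining property of $\{D_j\}_{j\in\N}$. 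Since $\ideal$ is downward closed, any superset of a set outside $\ideal$ is outside $\ideal$, giving $B_\beta\setminus B_\alpha\notin\ideal$. Thus $\{B_\alpha\}_{\alpha\in\Omega}\subseteq\jdeal\setminus\ideal$ is the required strictly $\ideal$-increasing transfinite sequence of infinite sets. I expect the potential pitfall to be accidentally working with $C_\alpha\setminus C_\beta$ versus $C_\beta\setminus C_\alpha$ and losing track of which side of the inclusion one is on; pairwise disjointness of the $D_j$'s makes the direction robust.
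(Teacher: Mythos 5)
Your proposal is correct and essentially identical to the paper's own proof: both define $B_\alpha:=\bigcup_{j\in C_\alpha}D_j$ from the witness family $\{D_j\}$ and the strictly $\finideal$-increasing $\{C_\alpha\}$ of Lemma~\ref{lem:strict-f-decreasing-transfinite-sequence-in-N-exist}, verify $\ideal$-increasingness via a finite union $\bigcup_{j\in F}D_j\in\ideal$, and derive strictness from $B_\beta\setminus B_\alpha\supseteq\bigcup_{j\in C_\beta\setminus C_\alpha}D_j\notin\ideal$ (the paper notes this is in fact an equality, but the inclusion suffices exactly as you argue).
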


\begin{proof}
By Lemma~\ref{lem:strict-f-decreasing-transfinite-sequence-in-N-exist}
let $\{C_{\alpha}\}_{\alpha\in\Omega}$ be a strictly $\finideal$-increasing
transfinite sequence of infinite subsets of $\N$. Since $\jdeal$
is much larger than $\ideal$, there exists a collection of non-empty
pairwise disjoint sets $\{D_{j}\}_{j\in\N}\subseteq\ideal$ so that
for every infinite subset $K\subseteq\N$ we have that $\bigcup_{k\in K}D_{k}\in\jdeal\backslash\ideal.$
For $\alpha\in\Omega$, we define $B_{\alpha}:=\bigcup_{j\in C_{\alpha}}D_{j}$.
Clearly, for $\alpha\in\Omega$, since $C_{\alpha}$ is infinite,
$B_{\alpha}$ is an infinite set.

Let $\alpha<\beta\in\Omega$ be arbitrary. Since the elements of $\{D_{j}\}_{j\in\N}$
are pairwise disjoint, we have that $B_{\beta}\backslash B_{\alpha}=\bigcup_{j\in C_{\beta}\backslash C_{\alpha}}D_{j}$.
But $C_{\beta}\backslash C_{\alpha}$ is infinite, and therefore $B_{\beta}\backslash B_{\alpha}=\bigcup_{j\in C_{\beta}\backslash C_{\alpha}}D_{j}\notin\ideal.$
Also $C_{\alpha}\subseteq C_{\beta}\mod\finideal$. Let $F\in\finideal$
be such that $C_{\alpha}\subseteq C_{\beta}\cup F$ so that $B_{\alpha}=\bigcup_{j\in C_{\alpha}}D_{j}\subseteq\parenth{\bigcup_{j\in C_{\beta}}D_{j}}\cup\parenth{\bigcup_{j\in F}D_{j}}$.
But the finite union $\bigcup_{j\in F}D_{j}$ is an element of $\ideal$,
and hence $B_{\alpha}\subseteq B_{\beta}\mod\ideal$. Since $\alpha<\beta\in\Omega$
were chosen arbitrarily, $\{B_{\alpha}\}_{\alpha\in\Omega}\subseteq\jdeal$
is strictly $\ideal$-increasing.
\end{proof}
{}
\begin{lem}
\label{lem:jc0/ic0 does not have MTSP}Let $\ideal$ and $\jdeal$
be ideals in $\N$, with $\ideal\Subset\jdeal$. The quotient $\idealczero{\jdeal}/\idealczero{\ideal}$
does not have the Monotone Transfinite Sequence Property.
\end{lem}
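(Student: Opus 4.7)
The plan is to transfer the strictly $\ideal$-increasing transfinite sequence of sets guaranteed by Lemma~\ref{lem:muchlarger_implies_transfinite_stirct_i_increasing} into a non-eventually-constant monotone transfinite sequence in the quotient lattice $\jdeal c_0 / \ideal c_0$ by taking images of characteristic functions under the quotient map. This directly mirrors the proof (given earlier in the excerpt) that $\ellinfty/c_0$ fails the Monotone Transfinite Sequence Property, only with $\finideal \Subset 2^{\N}$ replaced by the given $\ideal \Subset \jdeal$.

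First I would invoke Lemma~\ref{lem:muchlarger_implies_transfinite_stirct_i_increasing} to obtain a strictly $\ideal$-increasing transfinite sequence $\{B_{\alpha}\}_{\alpha\in\Omega} \subseteq \jdeal\setminus\ideal$ of infinite subsets of $\N$. For each $\alpha \in \Omega$ consider the characteristic function $1_{B_{\alpha}} \in \ellinfty$. Since $B_{\alpha} \in \jdeal$, for every $\varepsilon > 0$ the set $\{n \in \N : |1_{B_{\alpha}}(n)| \geq \varepsilon\}$ is contained in $B_{\alpha} \in \jdeal$, so $1_{B_{\alpha}} \in \jdeal c_0$. Write $q : \jdeal c_0 \to \jdeal c_0/\ideal c_0$ for the quotient map, and consider the transfinite sequence $\{q(1_{B_{\alpha}})\}_{\alpha \in \Omega}$ in the Banach lattice $\jdeal c_0/\ideal c_0$.

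Next I would verify that this sequence is monotone increasing. For $\alpha < \beta < \Omega$, since $B_{\alpha} \subseteq B_{\beta} \bmod \ideal$, we have $B_{\alpha}\setminus B_{\beta} \in \ideal$, so $1_{B_{\alpha}\setminus B_{\beta}} \in \ideal c_0$. Using that $q$ is a lattice homomorphism and that $1_{B_{\alpha}} \wedge 1_{B_{\beta}} = 1_{B_{\alpha}\cap B_{\beta}}$, the identity $1_{B_{\alpha}} - 1_{B_{\alpha}\cap B_{\beta}} = 1_{B_{\alpha}\setminus B_{\beta}} \in \ideal c_0$ yields $q(1_{B_{\alpha}}) = q(1_{B_{\alpha}\cap B_{\beta}}) \leq q(1_{B_{\beta}})$, as required.

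Finally I would show that the sequence is not eventually constant. Suppose for contradiction $q(1_{B_{\alpha}}) = q(1_{B_{\beta}})$ for some $\alpha < \beta$. Then $1_{B_{\beta}} - 1_{B_{\alpha}} \in \ideal c_0$, and its positive part, which equals $1_{B_{\beta}\setminus B_{\alpha}}$, also lies in the order ideal $\ideal c_0$. But a characteristic function $1_A$ belongs to $\ideal c_0$ only if $A \in \ideal$, so $B_{\beta}\setminus B_{\alpha} \in \ideal$, contradicting strict $\ideal$-increasingness. Hence $\{q(1_{B_{\alpha}})\}_{\alpha \in \Omega}$ is a monotone increasing, non-eventually-constant transfinite sequence in $\jdeal c_0/\ideal c_0$, proving that this quotient lacks the Monotone Transfinite Sequence Property. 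The only real subtlety is keeping track that the relevant statements about $1_A$ and the quotient lattice operations faithfully reflect membership in $\ideal$; beyond that, everything reduces to the lemma already in hand, so I expect no significant obstacle.
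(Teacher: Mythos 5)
Your argument is correct and is essentially the same as the paper's: invoke Lemma~\ref{lem:muchlarger_implies_transfinite_stirct_i_increasing} to get the strictly $\ideal$-increasing transfinite sequence $\{B_\alpha\}_{\alpha\in\Omega}\subseteq\jdeal\setminus\ideal$ and push the characteristic functions through $q$ to get a monotone, non-eventually-constant sequence in $\jdeal c_0/\ideal c_0$. The paper simply asserts that the resulting transfinite sequence is strictly increasing and not eventually constant, whereas you spell out the lattice-theoretic verification, but the route is identical.
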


\begin{proof}
Let $q:\idealczero{\jdeal}\to\idealczero{\jdeal}/\idealczero{\ideal}$
denote the quotient map and, for any $A\subseteq\N$, let $1_{A}$
denote the characteristic function of $A$. By Lemma~\ref{lem:muchlarger_implies_transfinite_stirct_i_increasing},
there exists a strictly $\ideal$-increasing transfinite sequence
$\{B_{\alpha}\}_{\alpha\in\Omega}\subseteq\jdeal\backslash\ideal$
of infinite subsets of $\N$. The transfinite sequence $\{q(1_{B_{\alpha}})\}_{\alpha\in\Omega}\subseteq\idealczero{\jdeal}/\idealczero{\ideal}$
is strictly increasing in the Banach lattice $\idealczero{\jdeal}/\idealczero{\ideal}$
and not eventually constant.
\end{proof}
\begin{cor}
\label{cor:jc0 =00005Cto jc0/ic0 has no uniform cont right inverse}Let
$\ideal$ and $\jdeal$ be ideals in $\N$, with $\ideal\Subset\jdeal$.
The unit ball $\closedball{\idealczero{\jdeal}/\idealczero{\ideal}}$
does not embed uniform continuously into $\idealczero{\jdeal}$. In
particular, $\idealczero{\ideal}$ is not complemented in $\idealczero{\jdeal}$,
and furthermore, the quotient map $q:\idealczero{\jdeal}\to\idealczero{\jdeal}/\idealczero{\ideal}$
does not admit a uniform continuous right inverse.
\end{cor}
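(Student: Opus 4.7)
The plan is to chain together the lemma just established (Lemma~\ref{lem:jc0/ic0 does not have MTSP}) with Kalton's Monotone Transfinite Sequence Theorem (Theorem~\ref{thm:Kalton's-Monotone-Transfinite-seq-thm}), observing at the outset that $\idealczero{\jdeal}$ is itself a Banach lattice and is isometrically a closed subspace of $\ellinfty$. Since $\idealczero{\ideal}$ is an order ideal in $\idealczero{\jdeal}$, the quotient $\idealczero{\jdeal}/\idealczero{\ideal}$ is a Banach lattice too (already noted after the definition of $\idealczero{\ideal}$), so Kalton's theorem applies to it.

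First I would prove the embedding statement. By Lemma~\ref{lem:jc0/ic0 does not have MTSP}, the Banach lattice $\idealczero{\jdeal}/\idealczero{\ideal}$ does not have the Monotone Transfinite Sequence Property. By the contrapositive of Theorem~\ref{thm:Kalton's-Monotone-Transfinite-seq-thm}, its closed unit ball cannot embed uniformly continuously into $\ellinfty$. Since $\idealczero{\jdeal}\subseteq\ellinfty$ isometrically, any uniformly continuous embedding of $\closedball{\idealczero{\jdeal}/\idealczero{\ideal}}$ into $\idealczero{\jdeal}$ would, by composition with the inclusion, give one into $\ellinfty$, which is impossible.

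Next I would deduce the statement about right inverses. Suppose, for a contradiction, that $\tau:\idealczero{\jdeal}/\idealczero{\ideal}\to\idealczero{\jdeal}$ were a uniformly continuous right inverse of $q$. Its restriction to $\closedball{\idealczero{\jdeal}/\idealczero{\ideal}}$ would be a uniformly continuous map into $\idealczero{\jdeal}$, with uniformly continuous (in fact $1$-Lipschitz) inverse given by $q$, since $q\circ\tau=\id$. This would provide exactly the forbidden embedding and give the desired contradiction.

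Finally, the non-complementation is an immediate consequence: a bounded linear projection onto $\idealczero{\ideal}$ in $\idealczero{\jdeal}$ would yield a continuous linear (hence Lipschitz, hence uniformly continuous) right inverse of $q$, contradicting what has just been proved. There is no real obstacle here; the only subtlety worth flagging is the verification that $\idealczero{\jdeal}$ inherits the Banach lattice structure from $\ellinfty$ and that $\idealczero{\ideal}$ sits inside it as an order ideal so that Theorem~\ref{thm:Kalton's-Monotone-Transfinite-seq-thm} can legitimately be invoked for the quotient.
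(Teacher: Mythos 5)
Your proposal is correct and follows essentially the same route as the paper: invoke Lemma~\ref{lem:jc0/ic0 does not have MTSP} to conclude $\idealczero{\jdeal}/\idealczero{\ideal}$ lacks the Monotone Transfinite Sequence Property, apply Kalton's Theorem~\ref{thm:Kalton's-Monotone-Transfinite-seq-thm} (using that $\idealczero{\jdeal}$ sits isometrically inside $\ellinfty$ to turn any embedding into $\idealczero{\jdeal}$ into one into $\ellinfty$), and then observe that a uniformly continuous right inverse $\tau$ would restrict to a uniformly continuous embedding of the unit ball with $1$-Lipschitz inverse $q$. Your added explicit handling of the non-complementation claim is a small elaboration the paper leaves implicit, but it is standard and correct.
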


\begin{proof}
Since $\idealczero{\jdeal}/\idealczero{\ideal}$ does not have the
Monotone Transfinite Sequence Property, by Kalton's Monotone Transfinite
Sequence Theorem (Theorem~\ref{thm:Kalton's-Monotone-Transfinite-seq-thm}),
the closed unit ball $\closedball{\idealczero{\jdeal}/\idealczero{\ideal}}$
cannot embed uniform continuously into $\idealczero{\jdeal}$ as $\closedball{\idealczero{\jdeal}/\idealczero{\ideal}}$
would then embed uniformly continuously into $\ellinfty$. In particular,
the quotient map $q:\idealczero{\jdeal}\to\idealczero{\jdeal}/\idealczero{\ideal}$
cannot have a uniformly continuous right inverse.
\end{proof}

\section{Some explicit ideals in $\protect\N$: $\protect\finideal$, $\protect\reciprocalideal$,
$\protect\densideal$, $\protect\banachdensideal$, $\protect\buckideal$,
and $\protect\tileideal$.\label{sec:The-ideals}}

In this section we explicitly define a number of classical ideals
in $\N$. In the next section we establish how these ideals relate
to each other through the strict partial order $\Subset$. Most of
the ideals in $\N$ that we consider are defined by using some notion
of density of sets in $\N$. We refer the reader to \cite{leonetti_tringali}
for further context. 

\medskip

By $\finideal$ we will denote the ideal of all finite subsets of
$\N$.

For a set $X\subseteq\N$, we will call the series (or sum if $X$
is finite) $\sum_{x\in X}x^{-1}$ the \emph{reciprocal series of $X$.
}By $\reciprocalideal:=\set{X\subseteq\N}{\sum_{x\in X}x^{-1}<\infty}$
we denote the ideal of all subsets of $\N$ whose reciprocal series
converge.

We define the \emph{density }of a set $X\subseteq\N$ by 
\[
d(X):=\lim_{n\to\infty}\frac{\abs{X\cap[1,n]}}{n},
\]
(if this limit exists) and the ideal $\densideal:=\{X\subseteq\N:d(X)=0\}$.

We define the\emph{ Banach density} of a set $X\subseteq\N$ by
\[
b(X):=\lim_{n\to\infty}\max_{k\in\N_{0}}\frac{\abs{X\cap[k+1,k+n]}}{n},
\]
and the ideal $\banachdensideal:=\{X\subseteq\N:b(X)=0\}.$

Define the set of all finite unions of infinite arithmetic progressions
as
\[
\cal R:=\set{\bigcup_{i=1}^{n}(a_{i}\N_{0}+b_{i})}{n\in\N,\ \{a_{i}\}_{i=1}^{n},\{b_{i}\}_{i=1}^{n}\subseteq\N}.
\]
We define the \emph{Buck density} of a set $X\subseteq\N$ by
\[
\buckdensity(X):=\inf\set{d(B)}{B\in\cal R,\ X\subseteq B\mod\finideal},
\]
and the ideal $\buckideal:=\{X\subseteq\N:\buckdensity(X)=0\}$.

For $r\in\N$, we define $\cal T_{r}:=2^{[0,r-1]}$ and $\cal{PT}_{r}:=2^{[0,r-1]}\backslash\{[0,r-1]\}$.
An element of $\cal T_{r}$ will be called a \emph{tile (of length
$r$)}. An element of $\cal{PT}_{r}$ will be called a \emph{proper
tile (of length $r$).} Define the set of all \emph{infinite tilings}
as
\[
\cal T:=\set{\parenth{\bigcup_{j=0}^{\infty}T+jr}+b}{r,b\in\N,\ T\in\cal T_{r}}\subseteq2^{\N}.
\]
We define the \emph{tile density} of a set $X\subseteq\N$ by
\[
\tiledensity(X):=\inf\set{d(A)}{A\in\cal T,\ X\subseteq A\mod\finideal},
\]
and the ideal $\tileideal:=\{X\subseteq\N:\tiledensity(X)=0\}$.

We remark in the following that Buck density and tile density are
actually the same.
\begin{lem}
\label{lem:A-and-T-are the same-mod-f}The inclusion $\cal T\subseteq\cal R$
holds and for every $B\in\cal R$ there exists some $C\in\cal T$
so that $B=C\mod\finideal$.
\end{lem}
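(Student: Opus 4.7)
The first inclusion $\cal T\subseteq\cal R$ is direct from the definitions, while the second claim I would handle by passing through residues modulo the least common multiple of the moduli appearing in $B$.

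For $\cal T\subseteq\cal R$: given a tiling $C=\left(\bigcup_{j=0}^{\infty}(T+jr)\right)+b$ with $T=\{t_{1},\ldots,t_{k}\}\subseteq[0,r-1]$ and $r,b\in\N$, I would simply rewrite
\[
C=\bigcup_{i=1}^{k}\left(r\N_{0}+(t_{i}+b)\right).
\]
Since $t_{i}+b\in\N$ and $r\in\N$, this exhibits $C$ as a finite union of infinite arithmetic progressions, hence $C\in\cal R$. (If $T$ is empty the equality $C=\emptyset$ is vacuous and there is nothing to check.)

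For the converse inclusion modulo $\finideal$: let $B=\bigcup_{i=1}^{n}(a_{i}\N_{0}+b_{i})\in\cal R$ and set $r:=\lcm(a_{1},\ldots,a_{n})$. For each $i$, let $S_{i}:=\{s\in[0,r-1]:s\equiv b_{i}\pmod{a_{i}}\}$ and define $T:=\bigcup_{i=1}^{n}S_{i}\subseteq[0,r-1]$. I would then propose as candidate tiling
\[
C:=\left(\bigcup_{j=0}^{\infty}(T+jr)\right)+r\ \in\cal T,
\]
where the shift $b:=r$ serves only to ensure $b\in\N$, as required by the definition of $\cal T$.

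The verification $B=C\mod\finideal$ is the key step, but only a routine exercise in modular arithmetic. For any $x\in\N$ with $x>\max_{i}b_{i}$, the condition $x\in a_{i}\N_{0}+b_{i}$ reduces to $x\equiv b_{i}\pmod{a_{i}}$; since $a_{i}$ divides $r$, this in turn is equivalent to $(x\bmod r)\in S_{i}$. Hence for all sufficiently large $x$, one has $x\in B$ if and only if $(x\bmod r)\in T$. Similarly, for $x\geq r$, $x\in C$ iff $x-r\in\bigcup_{j=0}^{\infty}(T+jr)$ iff $(x\bmod r)\in T$. Thus $B$ and $C$ agree outside the finite set $[1,\max(\max_{i}b_{i},r)]$, giving $B=C\mod\finideal$. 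I do not foresee any substantive obstacle; the only subtlety worth flagging is the deliberate choice $b=r$ (rather than $b=0$) so that the construction literally lies in $\cal T$.
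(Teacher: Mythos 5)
Your proof is correct and follows essentially the same route as the paper: take $r$ to be the least common multiple of the moduli $a_{i}$, form the tile of residues in $[0,r-1]$, and tile periodically with period $r$. The paper realizes the tile more tersely as $T:=B\cap[b,b+r)-b$ with $b:=\max\{b_{i}\}$ and shifts the tiling by $b$ rather than by $r$, but this is a cosmetic re-parametrization; your proposal additionally spells out the routine verification that $B=C\mod\finideal$, which the paper leaves to the reader.
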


\begin{proof}
It is clear that $\cal T\subseteq\cal R$.

Let $B\in\cal R$ with $n\in\N,\ \{a_{i}\}_{i=1}^{n},\{b_{i}\}_{i=1}^{n}\subseteq\N$,
so that $B=\bigcup_{i=1}^{n}(a_{i}\N_{0}+b_{i})$. Let $r:=\lcm\{a_{i}\}_{i=1}^{n}$
and $b:=\max\{b_{i}\}_{i=1}^{n}$. Define $T:=B\cap[b,b+r)-b$ and
$C:=\parenth{\bigcup_{j=0}^{\infty}T+jr}+b$. Then $B\subseteq C\mod\finideal$
and $C\subseteq B\mod\finideal$.
\end{proof}
\begin{cor}
The Buck density and the tile density are identical.
\end{cor}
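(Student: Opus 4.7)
The plan is to observe that this corollary is an almost immediate two-line consequence of Lemma~\ref{lem:A-and-T-are the same-mod-f}, together with the classical fact that the natural density $d$, when it exists, is invariant under symmetric difference by a finite set, and that for every $B\in\cal R$ the density $d(B)$ exists (a finite union of arithmetic progressions $a_{i}\N_{0}+b_{i}$ has density equal to that of the corresponding disjoint refinement, computable by inclusion-exclusion). Once these two routine observations are in hand, the corollary splits into two inequalities, each established in one line.

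First I would show $\buckdensity(X)\leq\tiledensity(X)$. By the inclusion $\cal T\subseteq\cal R$ of Lemma~\ref{lem:A-and-T-are the same-mod-f}, every $A\in\cal T$ with $X\subseteq A\mod\finideal$ is also an element of $\cal R$ witnessing $X\subseteq A\mod\finideal$. Hence the family of pairs over which $\tiledensity(X)$ is taken as an infimum is a subfamily of the family over which $\buckdensity(X)$ is taken, and passing to the infimum yields the desired inequality.

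For the reverse inequality $\tiledensity(X)\leq\buckdensity(X)$, fix any $B\in\cal R$ with $X\subseteq B\mod\finideal$. Apply the second half of Lemma~\ref{lem:A-and-T-are the same-mod-f} to obtain some $C\in\cal T$ with $B=C\mod\finideal$. Concatenating the finite witnesses gives $X\subseteq C\mod\finideal$, while $d(C)=d(B)$ since $B$ and $C$ differ by a finite set. Thus $\tiledensity(X)\leq d(C)=d(B)$, and taking the infimum over all admissible $B\in\cal R$ gives $\tiledensity(X)\leq\buckdensity(X)$.

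The only point requiring any thought at all is the invariance $d(B)=d(C)$ under alteration by a finite set, and even this is immediate: if $B\triangle C\subseteq F$ with $F\in\finideal$, then $\bigl|\,\abs{B\cap[1,n]}-\abs{C\cap[1,n]}\,\bigr|\leq\abs{F}$ for all $n$, so the two quotients $\abs{B\cap[1,n]}/n$ and $\abs{C\cap[1,n]}/n$ have the same limit (and both limits exist by the preceding remark on finite unions of arithmetic progressions). There is no genuine obstacle here; the substance of the corollary is entirely contained in Lemma~\ref{lem:A-and-T-are the same-mod-f}.
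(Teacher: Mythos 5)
Your proposal is correct and follows essentially the same route as the paper: both directions of the inequality are derived from the two halves of Lemma~\ref{lem:A-and-T-are the same-mod-f}, with the observation that density is unchanged by finite modification. You supply a few details the paper leaves implicit (existence of $d(B)$ for $B\in\cal R$, and the explicit concatenation of finite witnesses to pass from $X\subseteq B\mod\finideal$ and $B=C\mod\finideal$ to $X\subseteq C\mod\finideal$), but the argument is the same.
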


\begin{proof}
Let $X\subseteq\N$ be any set. For every $B\in\cal R$ with $X\subseteq B\mod\finideal$,
by Lemma~\ref{lem:A-and-T-are the same-mod-f}, there exists some
$C\in\cal T$ with $B=C\mod\finideal$. Since $B$ and $C$ differ
by at most a finite set, we have $d(B)=d(C)$. Therefore $\tiledensity(X)\leq\buckdensity(X)$.
On the other hand, since $\cal{T\subseteq\cal R}$ we have $\buckdensity(X)\leq\tiledensity(X)$,
so that $\buckdensity(X)=\tiledensity(X)$.
\end{proof}
In light of the previous corollary and that, for our purposes, the
tile density is easier to handle, we will work with the tile density
instead of the Buck density.

\section{Much larger inclusions of some explicit ideals}

In this section we will prove Corollary~\ref{cor:much-larger-inclusion-relations}
which displays the the much larger inclusions of the ideals defined
in the previous section by means of a number of lemmas. The arguments
are entirely elementary, but do become increasingly technical in this
section. The reader will be sometimes be primed with an intuitive
explanation of the arguments involved in subsequent results, the author's
hope being to aid in understanding the technical aspects more easily. 

The first two lemmas are entirely straightforward.
\begin{lem}
\label{lem:much-larger-A}The much larger inclusion $\finideal\Subset\tileideal\cap\reciprocalideal$
holds.
\end{lem}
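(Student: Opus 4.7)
The plan is to exhibit an explicit countable collection $\{D_j\}_{j\in\N}\subseteq\finideal$ of pairwise disjoint non-empty sets that witnesses the much larger inclusion in the sense of Definition~\ref{def:much-larger}. The natural candidate is the sequence of singletons $D_j:=\{2^j\}$, whose unions $\bigcup_{k\in K}D_k = \{2^k : k\in K\}$ enjoy the required sparsity properties for every infinite $K\subseteq\N$.

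With this choice, the sets $D_j$ are obviously finite, non-empty, and pairwise disjoint. For any infinite $K\subseteq\N$, the union $\{2^k : k\in K\}$ is infinite (hence outside $\finideal$), and its reciprocal series is bounded by $\sum_{j\in\N}2^{-j} = 1$, placing it in $\reciprocalideal$. The strict inclusion $\finideal\subsetneq\tileideal\cap\reciprocalideal$ then follows automatically from the tile-density claim below, since $\{2^j : j\in\N\}$ lies in $\tileideal\cap\reciprocalideal$ but not in $\finideal$.

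The only step requiring genuine verification is that $\tiledensity(\{2^k : k\in K\}) = 0$ for every infinite $K\subseteq\N$. Since tile density is monotone under inclusion (any infinite tiling covering a larger set also covers the smaller one), it suffices to show $\tiledensity(E) = 0$ where $E:=\{2^j : j\in\N\}$. For each $m\in\N$, I would take the tile $T:=\{0\}\in\cal T_{2^m}$ together with the shift $b:=2^m$, producing the periodic set $A_m := 2^m\N \in \cal T$ of density $d(A_m) = 2^{-m}$. The set $A_m$ contains $\{2^k : k\geq m\}$, so $E\subseteq A_m\mod\finideal$, yielding $\tiledensity(E)\leq 2^{-m}$ for every $m$, and hence $\tiledensity(E) = 0$.

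No substantial obstacle is expected. The argument works so cleanly precisely because the geometric sequence $\{2^j\}$ is simultaneously sparse enough to force reciprocal-series convergence and algebraically structured enough along the $2$-adic filtration to be absorbed by arbitrarily thin periodic sets in $\cal T$.
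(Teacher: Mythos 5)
Your proof is correct and takes essentially the same approach as the paper: you choose the same witness collection $D_j=\{2^j\}$, and your covering sets $A_m=2^m\N$ (via $T=\{0\}\in\cal T_{2^m}$, $b=2^m$) coincide with the paper's sets $\bigl(\bigcup_{j\geq 0}(T+2^nj)\bigr)+1$ with $T=\{2^n-1\}$, which are simply $2^n\N$ written in a slightly different form.
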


\begin{proof}
For every $j\in\N$ we define $B_{j}:=\{2^{j}\}$. Let $K\subseteq\N$
be any infinite set. It is clear that $\bigcup_{k\in K}B_{k}\notin\finideal$.
Since $\sum_{k=1}^{\infty}2^{-k}$ converges, it is also clear that
$\bigcup_{k\in K}B_{k}\in\reciprocalideal$. For every $n\in\N$,
let $T:=\{2^{n}-1\}\in\cal T_{2^{n}}$. It is further clear that $\bigcup_{k\in K}B_{k}\subseteq\bigcup_{j=0}^{\infty}\parenth{T+2^{n}j}+1\mod\finideal$.
While $d\parenth{\bigcup_{j=0}^{\infty}\parenth{T+2^{n}j}+1}=2^{-n}.$
Therefore $\tiledensity\parenth{\bigcup_{k\in K}B_{k}}=0$, so that
$\bigcup_{k\in K}B_{k}\in\tileideal$, and hence $\bigcup_{k\in K}B_{k}\in\tileideal\cap\reciprocalideal$.
\end{proof}
\begin{lem}
\label{lem:much-larger-B}The much larger inclusions $\banachdensideal\cap\reciprocalideal\Subset\reciprocalideal$
and $\banachdensideal\Subset\densideal$ hold.
\end{lem}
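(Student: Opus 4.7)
The plan is to prove both much larger inclusions via a single construction: blocks of consecutive integers placed at sparse locations. Fix a rapidly growing sequence $\{a_j\}_{j\in\N}\subseteq\N$ with $a_{j+1}>a_j+j$, and set $D_j:=[a_j,a_j+j-1]$. The sets $\{D_j\}_{j\in\N}$ are then automatically pairwise disjoint, and since each $D_j$ is finite we have $D_j\in\finideal\subseteq\banachdensideal\cap\reciprocalideal$, so $\{D_j\}_{j\in\N}$ is a legitimate candidate for the countable collection required by Definition~\ref{def:much-larger} for both inclusions simultaneously.

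The common observation driving both inclusions is that for every infinite $K\subseteq\N$, the set $X_K:=\bigcup_{k\in K}D_k$ satisfies $b(X_K)=1$, and hence $X_K\notin\banachdensideal$. Indeed, given any $n\in\N$, picking any $k\in K$ with $k\geq n$ (which exists because $K$ is infinite) yields $n$ consecutive integers inside $D_k\subseteq X_K$, witnessing $\max_{j\in\N_0}|X_K\cap[j+1,j+n]|/n=1$.

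For the inclusion $\banachdensideal\cap\reciprocalideal\Subset\reciprocalideal$, I would additionally impose $\sum_j j/a_j<\infty$, for instance $a_j:=j\cdot 2^j$. Since $D_k$ consists of $k$ integers each at least $a_k$, the estimate $\sum_{n\in X_K}1/n\leq\sum_{k\in K}k/a_k<\infty$ shows $X_K\in\reciprocalideal$, while the previous paragraph places $X_K$ outside $\banachdensideal\cap\reciprocalideal$.

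For the inclusion $\banachdensideal\Subset\densideal$, it remains to verify $d(X_K)=0$. Letting $j(n):=\max\{k\in K:a_k\leq n\}$, one has $|X_K\cap[1,n]|\leq\sum_{k\leq j(n)}k\leq j(n)^{2}$ while $n\geq a_{j(n)}$; for $a_j=j\cdot 2^j$ the ratio $j(n)^{2}/a_{j(n)}\to 0$ as $n\to\infty$ (since $K$ infinite forces $j(n)\to\infty$), yielding $d(X_K)=0$. The main, though minor, technicality is verifying $d(X_K)=0$ uniformly in $n$ rather than only along the subsequence $\{a_j\}_{j\in\N}$; this is automatic because between consecutive blocks the numerator $|X_K\cap[1,n]|$ is constant while $n$ only grows, so the ratio is monotonically controlled between jumps, and the single choice $a_j=j\cdot 2^j$ secures both inclusions at once.
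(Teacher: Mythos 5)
Your construction is essentially the same as the paper's: both place a block of $j$ consecutive integers at a rapidly growing location (the paper uses $[2^j+1,2^j+j]$, you use $[j2^j,j2^j+j-1]$), then observe that the ever-longer blocks force Banach density $1$, while the sparsity gives a convergent reciprocal series and zero ordinary density. The argument is correct and the minor differences in bookkeeping (the choice of base point, the explicit $j(n)$ versus the paper's dyadic $m$) are cosmetic.
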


\begin{proof}
For each $j\in\N$, define $B_{j}:=[2^{j}+1,2^{j}+j]\in\banachdensideal\cap\reciprocalideal.$
Let $K\subseteq\N$ be any infinite set. Then
\[
\sum_{b\in\bigcup_{j\in K}B_{j}}b^{-1}\leq\sum_{b\in\bigcup_{j\in\N}B_{j}}b^{-1}<\sum_{j=1}^{\infty}j2^{-j}<\infty,
\]
and hence $\bigcup_{j\in K}B_{j}\in\reciprocalideal$, while $b(\bigcup_{j\in K}B_{j})=1$
is clear from the definition of Banach density. Therefore $\bigcup_{j\in K}B_{j}\notin\banachdensideal$.

We claim that $\bigcup_{j\in K}B_{j}\in\densideal$. With $n\in\N$,
let $m=\min\{j\in\N:n\leq2^{j}\}$. Then
\begin{align*}
\frac{\abs{\bigcup_{j\in K}B_{j}\cap[1,n]}}{n} & \leq\frac{\abs{\bigcup_{j\in\N}B_{j}\cap[1,n]}}{n}\\
 & \leq\frac{\abs{\bigcup_{j\in\N}B_{j}\cap[1,2^{m}]}}{2^{m-1}}\\
 & <\frac{1+2+3+\ldots+m}{2^{m-1}}\\
 & =\frac{m(m+1)}{2^{m}}.
\end{align*}
Since $m\to\infty$ as $n\to\infty$, we have $d(\bigcup_{j\in K}B_{j})=0,$
and hence $\bigcup_{j\in K}B_{j}\in\densideal$.
\end{proof}
{}

In the next lemma we establish the much larger inclusions $\banachdensideal\cap\reciprocalideal\Subset\banachdensideal$
and $\reciprocalideal\Subset\densideal$. The idea is to consider
the nested infinite arithmetic progressions $2^{j}\N$ for $j\in\N$.
Let $j\in\N$. With $a,b\in\N$, if $b-a>0$ is large enough, the
set $2^{j}\N\cap[a,b]$ can be made to have arbitrarily large reciprocal
series. On the other hand, for any finite set $F\subseteq\N$, the
set $(F\cup2^{j}\N)$ has Banach density $2^{-j}$. This in mind,
one proceeds in choosing disjoint finite sets from the arithmetic
progressions $2^{j}\N\ (j\in\N)$ with increasingly large reciprocal
sums while having decreasingly small Banach density.
\begin{lem}
\label{lem:much-larger-D}The much larger inclusions $\banachdensideal\cap\reciprocalideal\Subset\banachdensideal$
and $\reciprocalideal\Subset\densideal$ hold.
\end{lem}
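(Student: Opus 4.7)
The plan is to exhibit a single countable family $\{D_{j}\}_{j\in\N}$ of pairwise disjoint finite subsets of $\N$ that witnesses both much-larger inclusions simultaneously. Each $D_{j}$ will be finite, hence automatically in $\banachdensideal\cap\reciprocalideal$; and for every infinite $K\subseteq\N$ we arrange that $\bigcup_{k\in K}D_{k}$ has \emph{divergent} reciprocal series (so it lies outside $\reciprocalideal$) while having \emph{zero} Banach density (so it lies inside $\banachdensideal\subseteq\densideal$). This gives $\bigcup_{k\in K}D_{k}\in\banachdensideal\setminus(\banachdensideal\cap\reciprocalideal)$, witnessing $\banachdensideal\cap\reciprocalideal\Subset\banachdensideal$, and $\bigcup_{k\in K}D_{k}\in\densideal\setminus\reciprocalideal$, witnessing $\reciprocalideal\Subset\densideal$, in one stroke.

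Following the hint preceding the lemma, I take each $D_{j}$ to be a finite arithmetic progression inside $2^{j}\N$. Recursively: at stage $j$, pick $a_{j}\in 2^{j}\N$ strictly larger than the right endpoint $a_{j-1}+(N_{j-1}-1)2^{j-1}$ of the span of $D_{j-1}$, and then pick $N_{j}\in\N$ large enough that $\sum_{i=0}^{N_{j}-1}(a_{j}+i\cdot 2^{j})^{-1}\geq 1$; this is possible because this partial sum diverges to $\infty$ as $N_{j}\to\infty$. Set $D_{j}:=\{a_{j}+i\cdot 2^{j}:0\leq i<N_{j}\}$ with span $[a_{j},a_{j}+L_{j}]$ where $L_{j}:=(N_{j}-1)2^{j}$. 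By construction the spans are pairwise disjoint, so the $D_{j}$ are pairwise disjoint, and the recipe forces $L_{j}$ to grow super-exponentially (essentially $L_{j}\gtrsim a_{j}e^{2^{j}}$, whence $L_{j}/L_{j-1}\to\infty$ very rapidly). Divergence of the reciprocal series of $\bigcup_{k\in K}D_{k}$ for any infinite $K$ is then immediate from $\sum_{k\in K}\sum_{d\in D_{k}}d^{-1}\geq|K|=\infty$.

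The main work is the Banach-density estimate. Fix a window $W:=[m+1,m+n]$ and partition $J:=\{k\in K:D_{k}\cap W\neq\emptyset\}$ into those $k$ whose span lies entirely inside $W$ (call this $J_{0}$) and those whose span straddles a boundary of $W$ (call this $J_{1}$, so $|J_{1}|\leq 2$). Since $D_{k}\subseteq 2^{k}\N$ one always has $|D_{k}\cap W|\leq\min(N_{k},n/2^{k}+1)$; this bounds the $J_{1}$-contribution by $O(1/2^{k^{*}(n)})$, where $k^{*}(n):=\max\{k:L_{k}\leq n\}$. For $J_{0}$ the pairwise-disjoint spans satisfy $\sum_{k\in J_{0}}L_{k}\leq n$, and the super-exponential growth of $L_{k}$ forces $\sum_{k\in J_{0}}N_{k}$ to be dominated by its largest term $N_{k^{\sharp}}$ (where $k^{\sharp}:=\max J_{0}\leq k^{*}(n)$), with $N_{k^{\sharp}}/n\leq L_{k^{\sharp}}/(n\cdot 2^{k^{\sharp}})\leq 1/2^{k^{\sharp}}$. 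As $n\to\infty$ we have $k^{*}(n)\to\infty$, so $\sup_{m}|\bigcup_{k\in K}D_{k}\cap W|/n=O(1/2^{k^{*}(n)})\to 0$, proving that the Banach density vanishes. The main obstacle is precisely this collapse of $\sum_{k\in J_{0}}N_{k}$ to its largest term: it must hold even when $W$ is placed to contain $D_{k}$'s for small indices $k$ whose individual local ratios $1/2^{k}$ are not small, and it is the super-exponential growth of $L_{k}$ (forced automatically by requiring $\sum_{d\in D_{j}}d^{-1}\geq 1$) that makes this telescoping possible.
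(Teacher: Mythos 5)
Your construction is essentially the paper's: finite sets $D_j$ contained in $L_j := 2^j\N$ with reciprocal series bounded below by a fixed positive constant, and disjoint spans. The divergence of the reciprocal series of $\bigcup_{k\in K}D_k$ is handled the same way in both proofs. Where you diverge from the paper is in the Banach-density estimate, and there the proposal both over-complicates the issue and leaves genuine gaps.

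The clean observation your proposal misses is the one the paper uses: since $D_k\subseteq 2^k\N\subseteq 2^{k_0}\N$ whenever $k\geq k_0$, one has, for \emph{every} $k_0\in\N$,
\[
\bigcup_{k\in K}D_k\ \subseteq\ \Bigl(\bigcup_{k=1}^{k_0}D_k\Bigr)\cup 2^{k_0}\N.
\]
The right-hand side is a finite set union $2^{k_0}\N$, hence has Banach density exactly $2^{-k_0}$. Monotonicity of Banach density then gives $b\bigl(\bigcup_{k\in K}D_k\bigr)\leq 2^{-k_0}$ for all $k_0$, so it is $0$. No window-by-window analysis, no super-exponential growth, no $k^*(n)$ is needed. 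In fact this works for \emph{any} choice of finite $D_k\subseteq 2^k\N$, so the specific arithmetic-progression structure of your $D_k$ (and the growth rate of $L_k$) is entirely irrelevant to the density conclusion; the only constraint you actually need is a positive lower bound on each $\sum_{d\in D_k}d^{-1}$.

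Your window analysis, taken as written, has gaps. For the $J_1$-contribution you claim $O(1/2^{k^*(n)})$, but for $k\in J_1$ with $k\leq k^*(n)$ your own bound gives only $|D_k\cap W|/n\leq 1/2^k+1/n$, and $1/2^k$ need not be $O(1/2^{k^*(n)})$ unless you separately argue (using monotonicity of $N_k$ and $n\geq L_{k^*(n)}\geq 2^{k^*(n)}$) that $N_k/n\leq N_{k^*(n)}/n\leq 1/2^{k^*(n)}+1/n$; that argument is not made. For $J_0$ you derive a bound of the form $O(1/2^{k^\sharp})$ with $k^\sharp:=\max J_0\leq k^*(n)$, but this inequality points the wrong way: $1/2^{k^\sharp}\geq 1/2^{k^*(n)}$, so the stated conclusion $\sup_m|\cdot|/n=O(1/2^{k^*(n)})$ does not follow from what you wrote. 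You gesture at "the super-exponential growth of $L_k$... makes this telescoping possible," and indeed that growth (together with $n\geq L_{k^*(n)}$) can rescue the estimate, but this is the crux and it is asserted, not proven. Given that the containment argument above settles the density claim in one line and with no growth hypothesis at all, I would recommend replacing the entire window analysis by it.
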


\begin{proof}
For all $j\in\N$, define $L_{j}:=2^{j}\N$ and let $B_{1}:=\{2\}$.
We recursively construct a collection $\{B_{j}\}_{j\in\N}$ of disjoint
finite subsets of $\N$ as follows. Let $k\in\N$, and assume that,
for all $1\leq j\leq k$, that $\{B_{j}\}_{j=1}^{k}$ has been defined
as a collection of finite pairwise disjoint subsets of $\N$, so that
$B_{j}\subseteq L_{j}$, the reciprocal series of $B_{j}$ is greater
or equal to $\sum_{m=2}^{j+1}m^{-1}$, and that there exists some
$p\in\N$ so that for all $q\geq p$ and $v\in\N_{0}$ we have.

\[
\frac{\abs{\parenth{\bigcup_{j=1}^{k}B_{j}\cup L_{k+1}}\cap[v+1,v+q]}}{q}<\frac{1}{k}.
\]
We choose $s\in\N$, and $p'\in\N$ so that the following two conditions
hold: Firstly, the set $B_{k+1}:=[\max B_{k}+1,s]\cap L_{k+1}$ is
required to have a reciprocal series greater than $\sum_{m=2}^{(k+1)+1}m^{-1}$.
This is possible, since $L_{k+1}$ is an infinite arithmetic progression.
Secondly, for all $q\geq p'$ and $v\in\N_{0}$ we require that 
\[
\frac{\abs{\parenth{\bigcup_{j=1}^{k+1}B_{j}\cup L_{k+1}}\cap[v+1,v+q]}}{q}<\frac{1}{k+1}.
\]
This is possible since $L_{k+1}$ has Banach density $2^{-(k+1)}$.

Let $K\subseteq\N$ be infinite. With $\{B_{j}\}_{j\in\N}$ as constructed,
we then have that the reciprocal series of $\bigcup_{j\in K}B_{j}$
diverges, and hence that $\bigcup_{j\in K}B_{j}\notin\reciprocalideal$.
Also for every $k\in K$ we have $\bigcup_{j\in K}B_{j}\subseteq\bigcup_{j=1}^{k}B_{j}\cup L_{k}$,
and there exists some $p\in\N$ so that for all $q\geq p$
\[
\frac{\abs{\parenth{\bigcup_{j=1}^{k}B_{j}\cup L_{k}}\cap[v+1,v+q]}}{q}<\frac{1}{k}.
\]
Since $K$ is infinite, we therefore have $\bigcup_{j\in K}B_{j}\in\banachdensideal\subseteq\densideal$.
\end{proof}
In the next lemma we show that $\tileideal\cap\reciprocalideal\Subset\tileideal$.
The main intuition is a recursive application of the following: We
start with some non-empty finite set $T\subseteq\N_{0}$, a tile,
with its length $r(>\max T)$. For any fixed $a\in\N$, by choosing
$n\in\N$ and $b\in\N$ large enough, the reciprocal series of $[a,b]\cap\bigcup_{j=0}^{n}(T+jr+1)$
can be made arbitrarily large. For $j\in\{1,\ldots,n\}$ setting $S_{j}:=T$
and for $j>n$ setting $S_{j}:=\emptyset$, if $m\in\N$ is large
enough, then 
\[
\frac{\abs{[1,(m+1)r]\cap\bigcup_{j=0}^{m}(S_{j}+jr+1)}}{(m+1)r-1}
\]
can be made arbitrarily small. This process is then recursively repeated
(with the tile $\bigcup_{j=0}^{m}(S_{j}+jr)$ of length $r':=(m+1)r$
in the next step) so as to construct a sequence of tiles whose infinite
tilings have decreasing density. One proceeds to choose a sequence
of pairwise disjoint finite sets from these nested infinite tilings
with increasing reciprocal series.
\begin{lem}
\label{lem:much-larger-C}The much larger inclusion $\tileideal\cap\reciprocalideal\Subset\tileideal$
holds.
\end{lem}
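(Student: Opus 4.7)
The plan is to construct pairwise disjoint finite sets $\{D_k\}_{k\in\N}$ (each automatically in $\tileideal\cap\reciprocalideal$ since they are finite) so that any union $\bigcup_{k\in K}D_k$ over an infinite $K\subseteq\N$ lies in $\tileideal\setminus\reciprocalideal$. The idea is to recursively produce a nested sequence of infinite tilings $A_1\supseteq A_2\supseteq\cdots$ of $\N$ with $d(A_k)\to 0$, and to select each $D_k\subseteq A_k$ as a finite portion whose reciprocal sum is at least $1$.

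At stage $k$, suppose we have a tile $T_k\subseteq[0,r_k-1]$ whose associated infinite tiling in $\N$ is $A_k$, of density $|T_k|/r_k$. Since $A_k$ is a finite union of infinite arithmetic progressions, its reciprocal series diverges, so I choose a finite set $D_k\subseteq A_k\cap[\max D_{k-1}+1,\infty)$ with reciprocal sum at least $1$, and then let $n_k\in\N$ be large enough that $D_k$ is contained in the union of the first $n_k+1$ periods of $A_k$. Next I choose $m_k$ much larger than $n_k$ and define
\[
T_{k+1}:=\bigcup_{j=0}^{n_k}(T_k+jr_k)\subseteq[0,r_{k+1}-1],\quad r_{k+1}:=(m_k+1)r_k.
\]
The induced tiling $A_{k+1}$ is contained in $A_k$ (each period of $T_{k+1}$ consists of $m_k+1$ consecutive periods of $T_k$, of which only the first $n_k+1$ are populated), and
\[
d(A_{k+1})=\frac{(n_k+1)|T_k|}{(m_k+1)r_k}=\frac{n_k+1}{m_k+1}d(A_k),
\]
which can be driven below $1/k$ by choosing $m_k$ large enough. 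Starting from $T_1=\{0\}$, $r_1=1$, $A_1=\N$, the recursion yields the required sequences.

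For the verification, let $K\subseteq\N$ be infinite. Since each $D_k$ has reciprocal sum at least $1$ and the $D_k$ are pairwise disjoint, $\sum_{b\in\bigcup_{k\in K}D_k}b^{-1}\geq\sum_{k\in K}1=\infty$, whence $\bigcup_{k\in K}D_k\notin\reciprocalideal$. For the tile density, fix any $k_0\in K$; then $\bigcup_{k\in K,\,k<k_0}D_k$ is finite and, by nesting, $D_k\subseteq A_k\subseteq A_{k_0}$ for every $k\geq k_0$, so $\bigcup_{k\in K,\,k\geq k_0}D_k\subseteq A_{k_0}$. Hence $\bigcup_{k\in K}D_k\subseteq A_{k_0}$ modulo a finite set, and $\tiledensity(\bigcup_{k\in K}D_k)\leq d(A_{k_0})<1/k_0$. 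Letting $k_0\to\infty$ in $K$ gives $\tiledensity(\bigcup_{k\in K}D_k)=0$, so $\bigcup_{k\in K}D_k\in\tileideal$.

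The main obstacle is the ordering of the choices inside the inductive step: one must first commit to $n_k$ (fixed by the requirement that $D_k\subseteq A_k$ has reciprocal sum at least $1$ and lies to the right of $\max D_{k-1}$), and only then select $m_k$ large enough to force $d(A_{k+1})<1/(k+1)$. In addition, one must carefully track the shifts used to realize each $T_k$-tiling as a subset of $\N$ so that the $A_k$ are genuinely nested. Both tasks are elementary but fiddly, and account for the technical feel of the full proof.
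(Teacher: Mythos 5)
Your proof is correct and follows essentially the same strategy as the paper: both recursively build a nested sequence of infinite tilings $A_1\supseteq A_2\supseteq\cdots$ of density tending to zero by taking finitely many consecutive periods of the previous tile and then padding with a long empty stretch, and both extract from each $A_k$ a finite block $D_k$ with reciprocal sum at least $1$ lying past the previously chosen blocks. The paper's construction carries some extra bookkeeping (e.g.\ insisting the $k$-th block have reciprocal sum at least $k$ rather than $1$, and an unused auxiliary density condition), but the underlying argument is the same as yours.
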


\begin{proof}
We define $T_{1}:=\{0\}$, $r_{0}:=0$ and $r_{1}:=2$.

Let $k\in\N$. We assume that sequence of natural numbers $r_{1}<r_{2}<r_{3}<\ldots<r_{k}$
with $r_{j+1}$ a multiple of $r_{j}$ for $j\in\{1,\ldots,k-1\}$
have been and sets $\{0\}=T_{1}\subseteq T_{2}\subseteq\ldots\subseteq T_{k}\subseteq\N_{0}$
have been defined to satisfy the following:
\begin{enumerate}
\item For all $i\in\{1,\ldots,k\}$, we have $T_{i}\in\cal{PT}_{r_{i}}$.
\item For all $i\in\{1,\ldots,k-1\}$, we have $T_{i+1}\subseteq\bigcup_{j=0}^{\infty}(T_{i}+jr_{i})$.
\item For all $i\in\{1,\ldots,k\},$ $\sum_{t\in T_{i}}(t+r_{i-1}+1)^{-1}\geq i$.
\item For all $i\in\{1,\ldots,k\}$, we have $\abs{\bigcup_{j=1}^{i}(T_{j}+r_{j-1}+1)}/\parenth{\sum_{j=1}^{i}r_{j}}<i^{-1}.$
\item For all $i\in\{1,\ldots,k\}$, we have $\abs{T_{i}}/r_{i}<i^{-1}.$
\end{enumerate}
Let $s\in\N$ be the least number so that $\bigcup_{j=0}^{s}(T_{k}+(r_{k}+1)+jr_{k})$
has reciprocal series greater or equal to $k$. Let $T_{k+1}:=\bigcup_{j=0}^{s}(T_{k}+jr_{k})$.
Let $r_{k+1}\in\N$ be the least multiple of $r_{k}$ so that $\abs{\bigcup_{j=1}^{k+1}(T_{j}+r_{j-1}+1)}/\sum_{j=1}^{k+1}r_{k+1}<(k+1)^{-1}$
and $\abs{T_{k+1}}/r_{k+1}<(k+1)^{-1}$.

With $\{T_{i}\}_{i=1}^{\infty}$ and $\{r_{i}\}_{i=1}^{\infty}$ recursively
defined as above, for every $j\in\N$, we set $B_{j}:=T_{j}+(r_{j-1}+1)$.
Then $\{B_{j}\}_{j=1}^{\infty}\subseteq\finideal\subseteq\tileideal\cap\reciprocalideal$
is a collection non-empty finite pairwise disjoint sets.

For any infinite set $K\subseteq\N$, the set $\bigcup_{k\in K}B_{k}$
has a divergent reciprocal series so $\bigcup_{k\in K}B_{k}\notin\reciprocalideal$.
This, while for every $i\in\N$, we have $\bigcup_{k\in K}B_{k}\subseteq\bigcup_{j=0}^{\infty}(T_{i}+jr_{i}+1)$.
But 
\[
d\parenth{\bigcup_{j=0}^{\infty}(T_{i}+jr_{i}+1)}=\abs{T_{i}}/r_{i}<i^{-1},
\]
and therefore $\tiledensity\parenth{\bigcup_{k\in K}B_{k}}=0$, so
that $\bigcup_{k\in K}B_{k}\in\tileideal$.
\end{proof}
In our final lemma we will show that $\tileideal\cap\reciprocalideal\Subset\banachdensideal\cap\reciprocalideal$
and $\tileideal\Subset\banachdensideal$. Before embarking on this
verification, we define what we term an \emph{anti-tile}.
\begin{defn}
Let $r,s\in\N$ with $r\leq s$ and choose a labeling of the elements
of $\cal{PT}_{r}$ as $\{T_{i}\}_{i=0}^{(2^{r}-1)-1}$. We define
an \emph{anti-tile (of order $r$, step-length $s$, and length $(2^{r}-1)s$)
}as any set of the form 
\[
A(r,s):=\bigcup_{i=0}^{(2^{r}-1)-1}\{\min(\N_{0}\backslash T_{i})\}+is.
\]
\end{defn}

In aid of the exposition that follows, we first display the following
example:
\begin{example}
\label{exa:A(3,3)}For a set $B\subseteq\N_{0}$ and all $j\in\N_{0}$,
define $b_{j}:=1$ if $j\in B$ and otherwise $b_{j}:=0$, and let
$\text{bin}(B):=b_{0}b_{1}b_{2}\ldots$, truncating a tail of all-zeros
anywhere if $B$ is finite. Let $\{T_{i}\}_{i=0}^{7-1}$ be a labelling
of $\cal{PT}_{3}$ according to increasing binary representation.
With commas inserted for readability, we have
\begin{align*}
\text{bin}(A(3,3)) & =\texttt{100,100,100,100,010,010,001}\\
\text{bin}(\bigcup_{i=0}^{7-1}T_{i}+3i) & =\texttt{000,001,010,011,100,101,110}.
\end{align*}
By construction, for every $j\in\{0,\ldots,6\}$, there exists a position
where a $1$ occurs in the string $\text{bin}\parenth{A(3,3)}$ and
where a $0$ occurs at the corresponding position in the string $\text{bin}\parenth{\bigcup_{i=0}^{6}T_{j}+3i}$,
e.g., 
\begin{align*}
\text{bin}\parenth{A(3,3)} & =\texttt{100100100100010010001}\\
\text{bin}\parenth{\bigcup_{i=0}^{7-1}T_{1}+3i} & =\texttt{100100100100100100100}\\
\text{bin}\parenth{\bigcup_{i=0}^{7-1}T_{5}+3i} & =\texttt{101101101101101101101}\\
\text{bin}\parenth{\bigcup_{i=0}^{7-1}T_{6}+3i} & =\texttt{110110110110110110110}.
\end{align*}
This feature extends to translates of infinite tilings: For any $a,b\in\N_{0}$
and for every $j\in\{0,\ldots,6\}$, there exists infinitely many
positions where a $1$ occurs in the string $\text{bin}\parenth{\bigcup_{i=0}^{\infty}A(3,3)+(7\times3)j+a}$,
and where, in the corresponding position, a $0$ occurs in the string
$\text{bin}\parenth{\bigcup_{i=0}^{\infty}T_{j}+3j+b}$.\bigskip 
\end{example}

For $r\in\N$, an anti-tile $A(r',s)$ of order $r'\geq r$ and step
length $s\geq r'$ with $s\in r\N$, is with so named, because $A(r',s)$
is constructed exactly to be such that an infinite tiling of $A(r',s)$
is not contained mod $\finideal$ in any infinite tiling of any tile
from $\cal{PT}_{r}$. Explicitly, let $r,r'\in\N$, with $r'\geq r$.
Let $s\in r\N$ with $s\geq r'$. For any $T\in\cal{PT}_{r}$ and
$b\in\N$ we have, 
\[
\abs{\parenth{\bigcup_{j=0}^{\infty}A(r',s)+j(2^{r'}-1)s+1}\backslash\parenth{\bigcup_{j=0}^{\infty}(T+jr)+b}}=\infty.
\]
Actually, the above idea also holds for sequences of anti-tiles of
order greater or equal to $r$. Explicitly, let $r\in\N$ and let
$\{A_{j}\}_{j=0}^{\infty}\subseteq2^{\N_{0}}$ be a sequence of anti-tiles,
and assume, for every $j\in\N_{0}$, that $A_{j}$ has order $r_{j}\geq r$
and has step-length $s_{j}\in r\N$ with $s_{j}\geq r_{j}$. Then,
for any infinite set $K\subseteq\N_{0}$, $T\in\cal{PT}_{r}$ and
$b\in\N$,
\[
\abs{\parenth{\bigcup_{j\in K}A_{j}+\parenth{\sum_{i=0}^{j-1}(2^{r_{i-1}}-1)s_{i}}+1}\backslash\parenth{\bigcup_{j=0}^{\infty}(T+jr)+b}}=\infty.
\]

To establish $\tileideal\cap\reciprocalideal\Subset\banachdensideal\cap\reciprocalideal$
and $\tileideal\Subset\banachdensideal$ in the next lemma, the idea
is to consider an infinite collection disjoint translated anti-tiles
of strictly increasing order. Every union of an infinite subcollection
of these translated anti-tiles cannot be contained in an any infinite
tiling (except for $\N$) which forces this union to have a tile-density
of $1$. Furthermore, by choosing the step lengths of the anti-tiles
in this infinite collection to increase sufficiently quickly, it can
be arranged that every union of an infinite subcollection of these
anti-tiles have zero Banach density and also have convergent reciprocal
series.
\begin{lem}
\label{lem:much-larger-E}The much larger inclusions $\tileideal\cap\reciprocalideal\Subset\banachdensideal\cap\reciprocalideal$
and $\tileideal\Subset\banachdensideal$ hold.
\end{lem}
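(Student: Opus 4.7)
The plan is to construct a single collection $\{B_{j}\}_{j\in\N}$ of non-empty pairwise disjoint finite subsets of $\N$ that witnesses both claimed much larger inclusions at once. Since each $B_{j}$ will be finite, $\{B_{j}\}_{j\in\N}\subseteq\finideal\subseteq\tileideal\cap\reciprocalideal\subseteq\banachdensideal\cap\reciprocalideal$, so it will suffice to show, for every infinite $K\subseteq\N$, that $X:=\bigcup_{k\in K}B_{k}$ lies in $\banachdensideal\cap\reciprocalideal$ but outside $\tileideal$.

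Each $B_{j}$ will be a translate of an anti-tile, chosen recursively on $j$. I would set $r_{j}:=j$, choose $s_{j}\in j!\,\N$ with $s_{j}\geq r_{j}$ and growing very rapidly (for concreteness $s_{j}\geq2^{j^{2}}$), and choose the translation offset $t_{j}$ to match the contiguous arrangement from the anti-tile property recalled just before the lemma, so that $B_{j}:=A(r_{j},s_{j})+t_{j}$. The divisibility condition $s_{j}\in j!\,\N$ ensures that for every fixed $r\in\N$, eventually $r_{j}\geq r$ and $s_{j}\in r\N$ with $s_{j}\geq r_{j}$, placing the tail of the construction within the scope of that anti-tile property.

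For any infinite $K\subseteq\N$ and $X:=\bigcup_{k\in K}B_{k}$: the reciprocal series of $X$ converges because $\sum_{x\in B_{j}}x^{-1}\leq(2^{r_{j}}-1)/t_{j}$ is summable thanks to the rapid growth of $s_{j}$ (and hence of $t_{j}$), so $X\in\reciprocalideal$. For $X\in\banachdensideal$ I would bound $\abs{X\cap[m+1,m+n]}$ uniformly in $m$ by splitting into (i) those $B_{j}$ entirely contained in the window, for which $(2^{r_{j}}-1)s_{j}\leq n$ forces (by $s_{j}\geq2^{j^{2}}$) the index $j$ to be of order $\sqrt{\log n}$, contributing at most $O(2^{\sqrt{\log n}})=o(n)$ points; and (ii) the at most two $B_{j}$ meeting the window at its endpoints, each contributing $O(n/s_{j})$ points, which is again $o(n)$. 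Dividing by $n$ yields $b(X)=0$, so $X\in\banachdensideal$.

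For $X\notin\tileideal$: suppose for contradiction $X\subseteq A\mod\finideal$ for some $A=\bigcup_{j=0}^{\infty}(T+jr)+b\in\cal T$ with $T\in\cal{PT}_{r}$ and $r,b\in\N$. For all $j$ sufficiently large, $r_{j}\geq r$ and $s_{j}\in r\N$ with $s_{j}\geq r_{j}$, so the tail of the anti-tile sequence (indexed by $K$ beyond some threshold) satisfies the hypotheses of the anti-tile property recalled before the lemma; that property gives infinitely many points of $X$ outside $A$, contradicting $X\subseteq A\mod\finideal$. The main obstacle is the uniform-in-$m$ Banach-density bound, because contiguously placed sparse finite sets can easily combine into a set of positive Banach density unless the step-lengths grow fast enough both to limit the per-anti-tile local density $1/s_{j}$ and to keep the number of small anti-tiles that fit in any window bounded by $o(n)$; the $j!$-divisibility coupled with the $2^{j^{2}}$ lower bound on $s_{j}$ is precisely what makes this work.
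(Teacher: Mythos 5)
Your construction is essentially the same as the paper's: lay out translated anti-tiles of increasing order $j$, with step-lengths divisible by $j!$ and growing rapidly, placed contiguously so that the anti-tile property rules out any proper infinite tiling covering a tail of the union. The paper simply takes $s_j=j!$, which already suffices; your extra requirement $s_j\geq 2^{j^2}$ is harmless over-engineering.

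The one imprecision is in part (ii) of your Banach-density estimate: the bound ``$O(n/s_j)$, which is again $o(n)$'' is not correct uniformly in $j$ -- for a fixed small index $j$ the quantity $n/s_j$ is of order $n$, not $o(n)$. The contribution of a boundary $B_j$ should instead be bounded by $\min\bigl(2^{r_j}-1,\ n/s_j+1\bigr)$: when $j$ is small the cardinality bound $2^{r_j}-1=O(1)$ takes over, and when $j$ is large the step-length bound $n/s_j+1$ is $o(n)$. With that patch the argument is correct; the paper sidesteps the case split by directly comparing $\sum_{j=j_0}^{k_0}(2^j-1)$ against $q\geq\sum_{j=j_0}^{k_0}(2^j-1)\,j!$ to obtain the bound $1/(k_0-1)!$.
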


\begin{proof}
Let $j\in\N$ and let $A_{j}$ be an anti-tile of order $j$ and step-length
$j!$.

Define $B_{j}:=A_{j}+\sum_{i=0}^{j-1}(2^{i}-1)i!+1$. The collection
$\{B_{j}\}_{j\in\N}$ is a collection of non-empty pairwise disjoint
finite subsets of $\N$.

Let $K\subseteq\N$ be any infinite set. We claim that $\bigcup_{k\in K}B_{k}\notin\tileideal$.
By the definition of anti-tiles, there exists no $R\in\cal T$, except
for $R=\N$, so that $\bigcup_{k\in K}B_{k}\subseteq R\mod\finideal$.
Therefore $\tiledensity\parenth{\bigcup_{k\in K}B_{k}}=1$ so that
$\bigcup_{k\in K}B_{k}\notin\tileideal$.

We claim $\bigcup_{k\in K}B_{k}\in\reciprocalideal$. For every $j\in\N$,
\begin{align*}
\sum_{b\in B_{j}}b^{-1} & \leq\sum_{k=1}^{2^{j}-1}\parenth{kj!+\sum_{i=0}^{j-1}(2^{i}-1)i!}^{-1}\\
 & \leq\sum_{k=1}^{2^{j}-1}\parenth{kj!}^{-1}\\
 & \leq(2^{j}-1)/j!\\
 & \leq2^{j}/j!.
\end{align*}
Therefore the reciprocal series of $\bigcup_{k\in K}B_{k}$ converges,
and hence $\bigcup_{k\in K}B_{k}\in\reciprocalideal$.

We claim that $\bigcup_{k\in K}B_{k}\in\banachdensideal$. Let $v\in\N_{0}$
and $q\in\N$. Let $j_{0}\in\N$ be the smallest number such that
$v+1<\sum_{i=0}^{j_{0}-1}(2^{i}-1)i!+1$ and $k_{0}\in\N$ the largest
number such that $\sum_{i=0}^{k_{0}+1}(2^{i}-1)i!+1<v+q$. We assume
that $q$ is large enough so that strict inequalities hold in 
\[
0<v+1<\sum_{i=0}^{j_{0}-1}(2^{i}-1)i!+1<\sum_{i=0}^{k_{0}}(2^{i}-1)i!+1<v+q.
\]
Then
\begin{align*}
\frac{\abs{\bigcup_{k\in K}B_{k}\cap[v+1,v+q]}}{q} & \leq\frac{\abs{A_{j_{0}}}+\abs{A_{j_{0}+2}}+\ldots+\abs{A_{k_{0}}}}{q}\\
 & \leq\frac{\sum_{j=j_{0}}^{k_{0}}(2^{j}-1)}{\sum_{i=0}^{k_{0}}(2^{i}-1)i!-\sum_{i=0}^{j_{0}-1}(2^{i}-1)i!}\\
 & \leq\frac{\sum_{j=j_{0}}^{k_{0}}(2^{j}-1)}{\sum_{i=j_{0}}^{k_{0}}(2^{i}-1)i!}\\
 & \leq\frac{k_{0}(2^{k_{0}}-1)}{\sum_{i=j_{0}}^{k_{0}}(2^{i}-1)i!}\\
 & \leq\frac{k_{0}(2^{k_{0}}-1)}{k_{0}!(2^{k_{0}}-1)}\\
 & =\frac{1}{(k_{0}-1)!}.
\end{align*}
Because $k_{0}\to\infty$ as $q\to\infty$ we obtain $b\parenth{\bigcup_{k\in K}B_{k}}=0$
and hence $\bigcup_{k\in K}B_{k}\in\banachdensideal$.
\end{proof}
\begin{cor}
\label{cor:much-larger-inclusion-relations}The following much larger
inclusions hold:

\begin{center}
\begin{tikzcd}  
	\finideal \arrow[Subseteq]{r} &  
	\tileideal\cap\reciprocalideal \arrow[Subseteq]{r}\arrow[Subseteq]{d} & 
	\banachdensideal\cap\reciprocalideal \arrow[Subseteq]{r}\arrow[Subseteq]{d} & 
	\reciprocalideal\arrow[Subseteq]{d} & \\
	&  
	\tileideal \arrow[Subseteq]{r} & 
	\banachdensideal \arrow[Subseteq]{r} & 
	\densideal \arrow[Subseteq]{r} & 
	2^{\N}.
\end{tikzcd}
\end{center}
\end{cor}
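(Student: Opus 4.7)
The plan is to assemble the corollary from the preceding five lemmas and to fill in the one remaining much larger inclusion with a short direct argument. Eight of the nine arrows in the diagram are covered verbatim by Lemmas~\ref{lem:much-larger-A}--\ref{lem:much-larger-E}, as follows: $\finideal\Subset\tileideal\cap\reciprocalideal$ from Lemma~\ref{lem:much-larger-A}; $\banachdensideal\cap\reciprocalideal\Subset\reciprocalideal$ and $\banachdensideal\Subset\densideal$ from Lemma~\ref{lem:much-larger-B}; $\tileideal\cap\reciprocalideal\Subset\tileideal$ from Lemma~\ref{lem:much-larger-C}; $\banachdensideal\cap\reciprocalideal\Subset\banachdensideal$ and $\reciprocalideal\Subset\densideal$ from Lemma~\ref{lem:much-larger-D}; and $\tileideal\cap\reciprocalideal\Subset\banachdensideal\cap\reciprocalideal$ together with $\tileideal\Subset\banachdensideal$ from Lemma~\ref{lem:much-larger-E}.

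The remaining arrow is $\densideal\Subset 2^{\N}$, which I would verify directly by exhibiting an explicit witness family. My choice is $D_{j}:=[2^{j-1},2^{j}-1]$ for $j\in\N$; these are pairwise disjoint finite sets, hence all elements of $\densideal$. For any infinite $K\subseteq\N$ and any $k\in K$, at the threshold $n=2^{k}-1$ one has
\[
\frac{\abs{\bigcup_{j\in K}D_{j}\cap[1,2^{k}-1]}}{2^{k}-1}\geq\frac{\abs{D_{k}}}{2^{k}-1}=\frac{2^{k-1}}{2^{k}-1}>\frac{1}{2}.
\]
Since $K$ is infinite, this yields infinitely many indices at which the density-ratio exceeds $1/2$, so $\bigcup_{j\in K}D_{j}$ has upper density at least $1/2$ and is therefore not a member of $\densideal$. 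Combined with the trivial strict inclusion $\densideal\subsetneq 2^{\N}$ (witnessed by $\N$), this gives $\densideal\Subset 2^{\N}$.

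At this stage there is no real obstacle; the whole technical burden has already been discharged in Lemmas~\ref{lem:much-larger-A}--\ref{lem:much-larger-E} and the final verification is a one-liner. The only small point worth flagging is that $X\in\densideal$ requires the density of $X$ to exist and equal zero, so having upper density at least $1/2$ genuinely excludes membership, regardless of whether the limit itself exists.
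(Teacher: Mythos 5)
Your mapping of Lemmas~\ref{lem:much-larger-A}--\ref{lem:much-larger-E} to eight of the nine arrows in the diagram is accurate, and you have spotted something real: the paper's own proof simply reads ``This follows from Lemmas~\ref{lem:much-larger-A} through~\ref{lem:much-larger-E},'' yet none of those five lemmas establishes the final arrow $\densideal\Subset 2^{\N}$. (Transitivity of $\Subset$ cannot rescue it either, since every established arrow points \emph{into} $\densideal$ or away from ideals lying below it, never out of $\densideal$ toward $2^{\N}$.) Your explicit witness $D_{j}:=[2^{j-1},2^{j}-1]$ does the job cleanly: each $D_{j}$ is finite hence in $\densideal$, the $D_{j}$ are pairwise disjoint, and for any infinite $K$ and any $k\in K$ the ratio $\abs{\bigcup_{j\in K}D_j\cap[1,2^k-1]}/(2^k-1)\geq 2^{k-1}/(2^k-1)>1/2$, so the union has upper density at least $1/2$ and thus cannot have density zero. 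Strict inclusion $\densideal\subsetneq 2^{\N}$ is immediate via $\N$. So your proof is correct and, strictly speaking, more complete than the paper's one-line citation; the discrepancy is that the paper has elided the elementary ninth verification that you supply.
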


\begin{proof}
This follows from Lemmas~\ref{lem:much-larger-A} through~\ref{lem:much-larger-E}.
\end{proof}
\begin{cor}
\label{cor:examples}The following quotient maps do not admit uniformly
continuous right inverses:

\begin{multicols}{2} 
\begin{enumerate}
\item []$\ellinfty\to\ellinfty/c_{0}.$
\item []$\ellinfty\to\ellinfty/(\tileideal\cap\reciprocalideal)c_{0}.$
\item []$\ellinfty\to\ellinfty/(\banachdensideal\cap\reciprocalideal)c_{0}.$
\item []$\ellinfty\to\ellinfty/\reciprocalideal c_{0}.$
\item []$\ellinfty\to\ellinfty/\tileideal c_{0}.$
\item []$\ellinfty\to\ellinfty/\banachdensideal c_{0}$.
\item []$\ellinfty\to\ellinfty/\densideal c_{0}$.
\end{enumerate}
\end{multicols}

\begin{multicols}{2} 
\begin{enumerate}
\item []$\densideal c_{0}\to\densideal c_{0}/c_{0}.$
\item []$\densideal c_{0}\to\densideal c_{0}/(\tileideal\cap\reciprocalideal)c_{0}.$
\item []$\densideal c_{0}\to\densideal c_{0}/(\banachdensideal\cap\reciprocalideal)c_{0}.$
\item []$\densideal c_{0}\to\densideal c_{0}/\reciprocalideal c_{0}.$
\item []$\densideal c_{0}\to\densideal c_{0}/\tileideal c_{0}.$
\item []$\densideal c_{0}\to\densideal c_{0}/\banachdensideal c_{0}$.
\end{enumerate}
\end{multicols}

\begin{multicols}{2} 
\begin{enumerate}
\item []$\banachdensideal c_{0}\to\banachdensideal c_{0}/c_{0}.$
\item []$\banachdensideal c_{0}\to\banachdensideal c_{0}/(\tileideal\cap\reciprocalideal)c_{0}.$
\item []$\banachdensideal c_{0}\to\banachdensideal c_{0}/(\banachdensideal\cap\reciprocalideal)c_{0}.$
\item []$\banachdensideal c_{0}\to\banachdensideal c_{0}/\tileideal c_{0}.$
\end{enumerate}
\end{multicols}

\begin{multicols}{2} 
\begin{enumerate}
\item []$\reciprocalideal c_{0}\to\reciprocalideal c_{0}/c_{0}.$
\item []$\reciprocalideal c_{0}\to\reciprocalideal c_{0}/(\tileideal\cap\reciprocalideal)c_{0}.$
\item []$\reciprocalideal c_{0}\to\reciprocalideal c_{0}/(\banachdensideal\cap\reciprocalideal)c_{0}.$
\end{enumerate}
\end{multicols}

\begin{multicols}{2} 
\begin{enumerate}
\item []$\tileideal c_{0}\to\tileideal c_{0}/c_{0}.$
\item []$\tileideal c_{0}\to\tileideal c_{0}/(\tileideal\cap\reciprocalideal)c_{0}.$
\end{enumerate}
\end{multicols}

\begin{multicols}{2} 
\begin{enumerate}
\item []$(\banachdensideal\cap\reciprocalideal)c_{0}\to(\banachdensideal\cap\reciprocalideal)c_{0}/c_{0}.$
\item []$(\banachdensideal\cap\reciprocalideal)c_{0}\to(\banachdensideal\cap\reciprocalideal)c_{0}/(\tileideal\cap\reciprocalideal)c_{0}.$
\end{enumerate}
\end{multicols}

\begin{multicols}{2} 
\begin{enumerate}
\item []$(\tileideal\cap\reciprocalideal)c_{0}\to(\tileideal\cap\reciprocalideal)c_{0}/c_{0}.$
\end{enumerate}
\end{multicols}
\end{cor}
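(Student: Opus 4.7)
My plan is to observe that every item in the list is of the form $\jdeal c_{0}\to\jdeal c_{0}/\ideal c_{0}$ for some pair of ideals $\ideal,\jdeal$ in $\N$ (where we identify $\ellinfty$ with $2^{\N}c_{0}$, since every bounded sequence trivially $2^{\N}$-converges to zero, and $c_{0}$ with $\finideal c_{0}$ by the definition in Section~\ref{sec:Preliminaries}), and then to reduce the entire statement to a single application of Corollary~\ref{cor:jc0 =00005Cto jc0/ic0 has no uniform cont right inverse} in each case. To do so I need only verify, for each pair $(\ideal,\jdeal)$ appearing in the list, that $\ideal\Subset\jdeal$.

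The verification proceeds by chaining the much larger inclusions already established in Corollary~\ref{cor:much-larger-inclusion-relations}. Since $\Subset$ is a strict partial order (as noted immediately after Definition~\ref{def:much-larger}), it is transitive, and the diagram in Corollary~\ref{cor:much-larger-inclusion-relations} therefore yields $\ideal\Subset\jdeal$ for every pair $(\ideal,\jdeal)$ with $\ideal$ appearing strictly to the upper-left of $\jdeal$ along any directed path in that diagram. For instance, $\finideal\Subset 2^{\N}$ follows from $\finideal\Subset\tileideal\cap\reciprocalideal\Subset\tileideal\Subset\banachdensideal\Subset\densideal\Subset 2^{\N}$, handling the map $\ellinfty\to\ellinfty/c_{0}$; similarly $\reciprocalideal\Subset 2^{\N}$ follows from $\reciprocalideal\Subset\densideal\Subset 2^{\N}$, handling $\ellinfty\to\ellinfty/\reciprocalideal c_{0}$; and $\tileideal\Subset\banachdensideal$ is already directly in the diagram, handling $\banachdensideal c_{0}\to\banachdensideal c_{0}/\tileideal c_{0}$. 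Each of the remaining items in the list corresponds to such a directed chain in the diagram, and I would simply go through the list ticking off the required chain.

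Once each pair is seen to satisfy $\ideal\Subset\jdeal$, Corollary~\ref{cor:jc0 =00005Cto jc0/ic0 has no uniform cont right inverse} applies verbatim and yields that the corresponding quotient map $\jdeal c_{0}\to\jdeal c_{0}/\ideal c_{0}$ does not admit a uniformly continuous right inverse. There is no real obstacle here: the substantive work is already contained in Lemmas~\ref{lem:much-larger-A}--\ref{lem:much-larger-E} and in Corollary~\ref{cor:jc0 =00005Cto jc0/ic0 has no uniform cont right inverse}. The only issue to be careful about is the implicit identification $\ellinfty=2^{\N}c_{0}$ and the use of transitivity of $\Subset$ to extract pairs $(\ideal,\jdeal)$ from non-adjacent vertices of the diagram; these are both immediate. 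The proof is therefore essentially a one-sentence citation of Corollaries~\ref{cor:jc0 =00005Cto jc0/ic0 has no uniform cont right inverse} and~\ref{cor:much-larger-inclusion-relations}, combined with transitivity of $\Subset$.
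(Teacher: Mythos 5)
Your proposal is correct and matches the paper's proof exactly: the paper likewise derives each item by combining Corollary~\ref{cor:much-larger-inclusion-relations} with Corollary~\ref{cor:jc0 =00005Cto jc0/ic0 has no uniform cont right inverse}, implicitly using transitivity of $\Subset$ and the identification $\ellinfty = 2^{\N}c_{0}$. You simply make explicit what the paper leaves to the reader.
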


\begin{proof}
This follows from the combination of Corollary~\ref{cor:much-larger-inclusion-relations}
and Corollary~\ref{cor:jc0 =00005Cto jc0/ic0 has no uniform cont right inverse}.
\end{proof}

    \bibliographystyle{amsplain}
    \bibliography{bibliography}
\end{document}